\renewcommand \a{\alpha}
\renewcommand \b{\beta}
\newcommand \br{\mathbb{R}}
\newcommand \Ric{\operatorname{Ric}}
\newcommand \Span{\operatorname{Span}}
\renewcommand \P{\mathcal{P}}
\newcommand\kg{\mathfrak k}
\newcommand\g{\mathfrak g}
\newcommand\h{\mathfrak h}
\newcommand\z{\mathfrak z}
\newcommand\m{\mathfrak m}
\newcommand \su{\mathfrak{su}}
\newcommand \f{\mathfrak{f}}
\newcommand \p{\mathfrak{p}}
\newcommand \ad{\operatorname{ad}}
\newcommand \Ad{\operatorname{Ad}}
\newcommand \diag{\operatorname{diag}}
\newcommand \Id{\operatorname{Id}}
\DeclareMathOperator{\Lie}{Lie}
\newcommand \<{\langle}
\renewcommand \>{\rangle}
\newcommand \ip{\<\cdot,\cdot\>}
\newtheorem{theorem}{Theorem}
\newtheorem*{theorem*}{Theorem}
\newtheorem{corollary}{Corollary}
\newtheorem*{corollary*}{Corollary}
\newtheorem*{conj*}{Conjecture}
\newtheorem{lemma}{Lemma}
\newtheorem{proposition}{Proposition}
\newtheorem*{prop*}{Proposition}
\theoremstyle{definition}
\newtheorem*{definition*}{Definition}
\theoremstyle{remark}
\newtheorem{remark}{Remark}
\newtheorem{example}{Example}
\newtheorem*{notation*}{Notation}
\newtheorem*{algorithm*}{Algorithm}
\begin{document}

\title[On invariant Riemannian metrics on Ledger--Obata spaces]{On invariant Riemannian metrics on \\ Ledger--Obata spaces}

\author{Y.~Nikolayevsky}
\address{Department of Mathematics and Statistics, La Trobe University, Melbourne, Australia 3086}
\email{Y.Nikolayevsky@latrobe.edu.au}
\thanks{The first named author was partially supported by ARC Discovery Grant DP130103485. }

\author{Yu.G.~Nikonorov}
\address{Southern Mathematical Institute of Vladikavkaz Scientific Centre of the Russian Academy of Sciences, Markus st. 22, Vladikavkaz, 362027, Russia}
\email{nikonorov2006@mail.ru}
\thanks{The second named author was partially supported by Grant 1452/GF4 of Ministry of Education and Sciences of the Republic of Kazakhstan for 2015-2017.}

\subjclass[2010]{53C30, 53C25, 17B20}

\keywords{Ledger--Obata space, naturally reductive metric, geodesic orbit space} 


\begin{abstract}
We study invariant metrics on Ledger--Obata spaces $F^m/\diag(F)$. We give the classification and an explicit construction of all naturally reductive metrics,
and also show that in the case $m=3$, any invariant metric is naturally reductive. We prove that a Ledger--Obata space is a geodesic orbit space if
and only if the metric is naturally reductive. We then show that a Ledger--Obata space is reducible if and only if it is isometric to the product of
Ledger--Obata spaces (and give an effective method of recognising reducible metrics), and that the full connected isometry group of an irreducible
Ledger--Obata space $F^m/\diag(F)$ is $F^m$. We deduce that a Ledger--Obata space is a geodesic orbit manifold if and only if it is the product of
naturally reductive Ledger--Obata spaces.
\end{abstract}

\maketitle

\section{Introduction}
\label{s:intro}


A Riemannian manifold $(M,g)$ is called a \emph{geodesic orbit manifold} (or a manifold with homogeneous geodesics, or a GO-manifold) if any
geodesic of $M$ is an orbit of a $1$-parameter subgroup of the full isometry group of $(M,g)$ (one loses no generality by replacing the full
isometry group by its connected identity component). A Riemannian manifold $(M=G/H,g)$, where $H$ is a compact subgroup of the Lie group $G$ and $g$
is a $G$-invariant Riemannian metric on $M$, is called a \emph{geodesic orbit space} (or a space with homogeneous geodesics, or a GO-space) if any
geodesic of $M$ is an orbit of a $1$-parameter subgroup of the group $G$. Hence a Riemannian manifold $(M,g)$ is a geodesic orbit manifold, if it is
a geodesic orbit space with respect to its full isometry group. This terminology was introduced in \cite{KV} by O.~Kowalski and L.~Vanhecke who initiated
the systematic study of such spaces.

The class of geodesic orbit spaces includes (but is not limited to) symmetric spaces, weekly symmetric spaces, normal and generalized normal
homogeneous spaces and naturally reductive spaces. For the current state of knowledge in the theory of geodesic orbit spaces and manifolds we
refer the reader to \cite{Nik2017,Arv} and the bibliographies therein.
It should be noted also some natural generalizations of geodesic orbit spaces from the mechanical point of view \cite{Toth}.

Let $(M=G/H, g)$ be a homogeneous Riemannian space and let $\g=\h\oplus \p$ be an $\Ad(H)$-invariant decomposition,
 where $\g=\Lie(G), \;\h=\Lie(H)$ and $\p$ is identified with the tangent space to $M$ at $eH$. The Riemannian metric $g$ is $G$-invariant and
 is determined by an $\Ad(H)$-invariant inner product $(\cdot,\cdot)$ on $\p$. The metric $g$ is called \emph{naturally reductive} if an
 $\Ad(H)$-invariant complement $\p$ can be chosen in such a way that $([X,Y]_{\p},X) = 0$ for all $X,Y \in \p$, where the subscript $\p$ denotes the
 $\p$-component. If this is the case, we say that the (naturally reductive) metric $g$ \emph{is generated by the pair} $(\p, (\cdot ,\cdot ))$.
 For comparison, on the Lie algebra level, $g$ is geodesic orbit if and only if for any $X \in \p$ (with any choice of $\p$), there
 exists $Z \in \h$ such that $([X+Z,Y]_{\p},X) =0$ for all $Y \in \p$ \cite[Proposition~2.1]{KV}. It immediately follows that any naturally
 reductive space is a geodesic orbit space; the converse is false when $\dim M \ge 6$.

\smallskip

Let $F$ is a connected, compact, simple Lie group. The Riemannian homogeneous space $F^m/\diag(F)$, where $F^m=F\times F\times\dots\times F, \; m\geq 2$,
and $\diag(F)=\{(x,x,\dots,x) \, | \, x\in F\}$, is called a \emph{Ledger--Obata space}. Ledger--Obata spaces were first introduced in \cite{LO1968} as
a natural generalization
of symmetric spaces $F^2/\diag(F)$, and have been actively studied since then. In particular, Einstein metrics on $F^3/\diag(F), F^4/\diag(F)$ and on
a general Ledger--Obata space were classified in \cite{Nik2002, CNN2017}.

Denote $\f$ the Lie algebra of the Lie group $F$, $\g = m\f=\f \oplus \f \oplus \dots \oplus \f$ the Lie algebra of $G=F^m$, and
$\h=\diag(\f)= \{(X,X,\dots,X)\, | \, X\in \f\} \subset \g$ the Lie algebra of $H=\diag(F)\subset G=F^m$. Let $\ip$ be minus the Killing form
on $\g$, and $\ip_i$ its restriction to the $i$-th summand $\f$.

We prove the following characterization of naturally reductive metrics on $F^m/\diag(F)$.
\begin{theorem} \label{th:natred}
Let $F$ be a connected, compact, simple Lie group. An invariant metric on a Ledger--Obata space $F^m/\diag(F)$ is naturally reductive if and only if
it is generated by a pair $(\p, (\cdot ,\cdot ))$ such that
\begin{enumerate} [label=\emph{(\alph*)},ref=\alph*]
  \item \label{it:natredideal}
  either $\p=(m-1)\f$ is an ideal in $\g$ and $(\cdot ,\cdot )$ is an $\ad(\p)$-invariant inner product on it, so that
  $(\cdot ,\cdot )=\sum_{i=1}^{m-1} \beta_i \ip_i$, where $\beta_i > 0$.

  \item \label{it:natredQ}
  or $\p$ is the orthogonal complement to $\h=\diag(\f)$ relative to an $\ad(\g)$-invariant quadratic form
  $Q=\sum_{i=1}^{m} \alpha_i \ip_i$ on $\g$ and $(\cdot ,\cdot ) = Q|_\p$, where
  \begin{enumerate}[label=\emph{(\roman*)},ref=\roman*]
    \item \label{it:natredQ+}
    either $\alpha_i > 0$ for all $i=1, \dots, m$,

    \item \label{it:natredQ-}
    or there exists $j=1, \dots, m$ such that $\alpha_j < 0$ and $\alpha_i > 0$ for all $i \ne j$, and $S:=\sum_{i=1}^m \alpha_i < 0$.
  \end{enumerate}
\end{enumerate}
\end{theorem}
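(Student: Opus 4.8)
The plan is to characterize naturally reductive metrics on $G/H = F^m/\diag(F)$ by analyzing the possible reductive complements $\p$ together with the invariant inner products on them, using the semisimplicity and simplicity structure heavily. The starting point is the observation that $\g = m\f$ decomposes as an $\Ad(H)$-module into $m$ copies of the simple $F$-module $\f$, and $\h$ sits as the diagonal. Since $\f$ is simple, any $\Ad(G)$-invariant symmetric bilinear form on $\g$ is of the form $Q = \sum_{i=1}^m \alpha_i \ip_i$, and by Schur's lemma any $\Ad(H)$-invariant complement $\p$ to $\h$, as well as any $\Ad(H)$-invariant inner product on it, is built out of the isotypic components. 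The essential combinatorial fact is that the space of $\Ad(H)$-module morphisms $\f \to \g$ and the structure of $\Ad(H)$-invariant complements to the diagonal $\h$ in $m\f$ is governed by linear algebra in $\mathbb R^m$: a complement corresponds to a linear hyperplane-type condition, and the $\Ad(H)$-invariant metrics correspond to positive-definite forms compatible with it.

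Next, I would impose the naturally reductive condition $([X,Y]_\p, X) = 0$ for all $X, Y \in \p$ and translate it into an algebraic constraint on the data $(\p, (\cdot,\cdot))$. Writing elements of $\g$ as $m$-tuples $(X_1, \dots, X_m)$ with $[\,\cdot\,,\cdot\,]$ acting componentwise, and using that the only $\Ad(F)$-invariant inner products on the simple $\f$ are multiples of $\ip$, the condition becomes a system of quadratic identities among the coefficients describing $\p$ and the weights of the inner product. I expect that after diagonalizing appropriately (choosing a basis adapted to the isotypic decomposition), the naturally reductive condition forces $\p$ either to be a subsum of coordinate copies of $\f$ — giving case (a), where $\p = (m-1)\f$ is an ideal and the metric is just an arbitrary bi-invariant metric on that ideal — or to be the $Q$-orthogonal complement of $\h$ for some invariant quadratic form $Q = \sum \alpha_i \ip_i$ on all of $\g$ — giving case (b). The key computation is verifying that for $\p = Q^\perp \cap \h^\perp$-type complements, the condition $([X,Y]_\p, X)=0$ holds precisely when the inner product is the restriction of $Q$ itself, using the identity $\langle [X,Y], Z\rangle + \langle Y, [X,Z]\rangle = 0$ for the bi-invariant $\ip$.

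Then I would pin down the positivity conditions. The inner product $(\cdot,\cdot)$ on $\p$ must be positive definite; when $\p = Q^\perp$ with $Q = \sum \alpha_i \ip_i$ and $(\cdot,\cdot) = Q|_\p$, positive definiteness of $Q|_\p$ does not require $Q$ itself to be positive definite. A direct computation of $Q$ restricted to $\p = \h^{\perp_Q}$ — parametrizing $\p$ explicitly, e.g. as $\{(X_1,\dots,X_m) : \sum \alpha_i X_i = 0\}$ when all $\alpha_i \neq 0$, or handling the degenerate sub-cases — shows that $Q|_\p > 0$ is equivalent to: either all $\alpha_i > 0$ (case (b)(i)), or exactly one $\alpha_j < 0$ with $S = \sum_i \alpha_i < 0$ and the rest positive (case (b)(ii)); the relevant inequality comes from the fact that the signature of $Q|_\p$ is controlled by the sign of $S = \sum \alpha_i$ relative to the individual $\alpha_i$, via a rank-one perturbation / Cauchy interlacing type argument. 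One must also check that these two families genuinely produce naturally reductive metrics (the ``if'' direction), which amounts to exhibiting the reductive complement explicitly and verifying $([X,Y]_\p,X)=0$ — routine given the bi-invariance of $\ip$.

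**The main obstacle** I anticipate is the bookkeeping in the ``only if'' direction: classifying all $\Ad(H)$-invariant complements $\p$ and all compatible $\Ad(H)$-invariant inner products, and then showing the naturally reductive condition collapses this a priori large family into exactly the two listed cases. In particular, ruling out ``mixed'' complements — ones that are neither a coordinate subsum nor a $Q$-orthogonal complement — requires a careful argument, likely by showing that the naturally reductive identity, when expanded in the isotypic basis, forces the Gram matrix of the inner product relative to the natural coordinates to have a very rigid form (essentially that it must be simultaneously diagonalizable with the bracket structure). Separating the case $m = 2$ (where everything is symmetric and classical) from $m \geq 3$, and handling the boundary cases where some $\alpha_i$ vanish (so $\p$ contains a full copy of $\f$ and the ideal case (a) emerges as a limit), will need attention to detail but no new ideas.
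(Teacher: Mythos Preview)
Your outline is a plausible direct attack, but the paper takes a much shorter route that sidesteps exactly the obstacle you flag. Instead of expanding $([X,Y]_\p,X)=0$ in coordinates and wrestling with the resulting quadratic identities, the paper invokes Kostant's theorem \cite[Theorem~4]{K56}: if a naturally reductive metric is generated by $(\p,(\cdot,\cdot))$, then there is a unique $\ad(\mathfrak{q})$-invariant nondegenerate form $Q$ on the ideal $\mathfrak{q}=[\p,\p]+\p$ with $Q(\p,\mathfrak{q}\cap\h)=0$ and $Q|_\p=(\cdot,\cdot)$. Since $\mathfrak{q}$ is an ideal in $\g=m\f$ containing the hyperplane-dimensional $\p$, a dimension count forces either $\mathfrak{q}=\p$ (so $\p=(m-1)\f$ is an ideal, giving case~(a)) or $\mathfrak{q}=\g$ (so $Q=\sum_i\alpha_i\ip_i$ is $\ad(\g)$-invariant and $\p=\h^{\perp_Q}$, giving case~(b)). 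This one stroke eliminates all the ``mixed complement'' bookkeeping you anticipate; your approach would essentially re-derive Kostant's extension result in this special case, which is doable but considerably longer, and your sketch does not yet isolate the algebraic mechanism that forces the extension.

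For the positivity analysis in case~(b), the paper's argument is also crisper than the interlacing idea you propose: writing $\p$ via a rank-$(m-1)$ matrix $T$ with $T\cdot(\alpha_1,\dots,\alpha_m)^t=0$, one augments $T$ by a row of ones to get an invertible $\tilde T$, and then $\tilde T\,\diag(\alpha)\,\tilde T^t$ is block-diagonal with blocks $T\diag(\alpha)T^t$ and $S=\sum_i\alpha_i$. Sylvester's law of inertia then reads off the conditions on the $\alpha_i$ directly.
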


Note that all the metrics from \eqref{it:natredideal} are reducible when $m >2$; this is not necessarily true for the metrics in \eqref{it:natredQ}. 

In Corollary~\ref{cor:natredgm} in Section~\ref{s:natred}, we give the uniform description of all inner products on the ideal $(m-1)\f$
(viewed as an $\Ad(H)$-invariant complement to $\h$ in $\g$) which produce a naturally reductive metric on $F^m/\diag(F)$. Using that description we
obtain the following.

\begin{proposition}[{\cite[Proposition~3]{CN2017}}]\label{ledob6.prop}
Every invariant Riemannian metric on a Ledger--Obata space $F^3/\diag(F)$, where $F$ is a connected, compact, simple Lie group, is naturally
reductive and hence is geodesic orbit.
\end{proposition}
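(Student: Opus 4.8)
The plan is to reduce the statement, via Theorem~\ref{th:natred}, to the purely linear-algebraic fact that every $\Ad(H)$-invariant inner product on the tangent space of $F^3/\diag(F)$ arises from one of the two families (\ref{it:natredideal}) or (\ref{it:natredQ}). First I would fix notation: write $\g=\f_1\oplus\f_2\oplus\f_3$, $\h=\diag(\f)$, and take the $\ip$-orthogonal complement $\m$ of $\h$ in $\g$, so that $\m\cong\f\oplus\f$ as an $\h$-module, the two copies being inequivalent to each other only in the trivial way (they are each isomorphic to $\f$ as an $\Ad(F)$-module via the isotropy action). The key structural input is Schur's Lemma applied to $\f$ as a real representation of the compact simple group $F$: since $\f$ is irreducible over $\br$ and $\End_{\Ad(F)}(\f)=\br$ (the Killing form is, up to scale, the only invariant bilinear form, and $\f$ is of real type for a compact simple Lie algebra), the space of $\Ad(H)$-invariant symmetric bilinear forms on $\m\cong\f^{\oplus 2}$ is $3$-dimensional, spanned by $\ip_1|_\m$-type blocks: concretely, an $\Ad(H)$-invariant inner product on $\m$ is given by a positive-definite $2\times2$ symmetric matrix $A=(a_{kl})$ acting on the "multiplicity space" $\br^2$, i.e. it has the form $(X,Y)=\sum_{k,l}a_{kl}\langle X_k,Y_l\rangle$ after identifying $\m$ with $\f\otimes\br^2$.

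Next I would diagonalise. Because $\Ad(F)$ acts on $\m$ through the isotropy representation commuting with the full $\GL(2,\br)$ on the multiplicity factor, and because any two $\Ad(H)$-invariant inner products on $F^3/\diag(F)$ differ by such a matrix $A$, I may apply an element of the isotropy-normalising symmetry — in fact the outer $S_3$-action permuting the three factors $\f_1,\f_2,\f_3$ of $\g$, together with conjugation — to bring $A$ into a normal form. The goal is to show that every positive-definite $A$ can, after such a symmetry, be realised either as the restriction to $\m$ of a quadratic form $Q=\sum\alpha_i\ip_i$ (the case (\ref{it:natredQ}) forms) or as an invariant inner product on an ideal $(m-1)\f=2\f$ (the case (\ref{it:natredideal}) forms). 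The map sending $(\alpha_1,\alpha_2,\alpha_3)$ to the Gram matrix of $Q|_\m$ in a fixed basis of $\m$ is an explicit affine-linear map $\br^3\to\Sym^2(\br^2)\cong\br^3$; I would compute its image and check it is all of the positive-definite cone, with the sign conditions in (\ref{it:natredQ+})--(\ref{it:natredQ-}) corresponding exactly to positive-definiteness of $Q|_\m$ (even when $Q$ itself is indefinite), and the degenerate locus of this map accounting for the extra family (\ref{it:natredideal}).

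The main obstacle I anticipate is the last bookkeeping step: verifying that the two explicitly parametrised families in Theorem~\ref{th:natred} together exhaust \emph{all} positive-definite $2\times2$ matrices $A$ (up to the $S_3$-symmetry), rather than just a proper subvariety — in other words, a dimension count ($3=\dim\Sym^2(\br^2)$, and the $Q$-family is generically $3$-dimensional modulo overall scaling, plus the $1$-parameter ideal family on the exceptional locus) must be turned into an actual surjectivity statement onto the positive cone. Concretely this means: given arbitrary $a_{11},a_{12},a_{22}$ with $a_{11},a_{22}>0$ and $a_{11}a_{22}>a_{12}^2$, exhibit $\alpha_i$ (possibly one negative, with $S<0$) solving the three equations from $Q|_\m$, or, when no such $\alpha_i$ exist, recognise $A$ as an invariant product on $2\f$; handling the boundary between the two cases cleanly is where care is needed. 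Once surjectivity is established, Theorem~\ref{th:natred} immediately gives natural reductivity of an arbitrary invariant metric, and the final clause ("hence is geodesic orbit") follows from the general fact, recalled in the introduction, that every naturally reductive space is a geodesic orbit space. $\square$
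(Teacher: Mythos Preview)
Your overall strategy---show that every positive-definite $2\times 2$ Gram matrix on the multiplicity space arises from one of the families in Theorem~\ref{th:natred}---is exactly the paper's approach, but there is a genuine error in your setup. The naturally reductive metric in Theorem~\ref{th:natred}\eqref{it:natredQ} is \emph{not} $Q|_\m$ for the fixed $\ip$-orthogonal complement $\m$: the complement $\p$ there is the $Q$-orthogonal complement of $\h$, which moves with $(\alpha_1,\alpha_2,\alpha_3)$. When one transports $Q|_\p$ to a fixed complement (the paper uses $\g_m$ rather than $\m$), the resulting Gram matrix depends \emph{rationally}, not affine-linearly, on the $\alpha_i$: Corollary~\ref{cor:natredgm}\eqref{it:natredgmQ} gives $a_{ij}=\delta_{ij}\alpha_i-\alpha_i\alpha_j/S$ with $S=\sum_i\alpha_i$. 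Your linear map $(\alpha_i)\mapsto Q|_\m$ is indeed a bijection onto $\Sym^2(\br^2)$, but that only shows every invariant inner product is the restriction of \emph{some} $\ad(\g)$-invariant form to the fixed $\m$---which is trivially true and says nothing about natural reductivity.

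The paper instead works on $\g_m=\f\oplus\f\oplus 0$ via Corollary~\ref{cor:natredgm} and explicitly inverts the rational system: given $(a_{ij})$ positive definite with $a_{12}(a_{11}+a_{12})(a_{22}+a_{12})\neq 0$, it exhibits
\[
(\alpha_1,\alpha_2,\alpha_3)=\Bigl(\tfrac{D}{a_{22}+a_{12}},\,-\tfrac{D}{a_{12}},\,\tfrac{D}{a_{11}+a_{12}}\Bigr),\qquad D=a_{11}a_{22}-a_{12}^2>0,
\]
and then verifies the sign conditions of Theorem~\ref{th:natred}\eqref{it:natredQ} by a short case analysis on the sign of $a_{12}$. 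The degenerate locus $a_{12}(a_{11}+a_{12})(a_{22}+a_{12})=0$ is handled separately by parts \eqref{it:natredgmgm} and \eqref{it:natredgmgk} of Corollary~\ref{cor:natredgm} (the ideal case \eqref{it:natredideal} on $\g_3$ and its permuted copies on $\g_1,\g_2$). No $S_3$-normal-form reduction is needed. Your anticipated ``main obstacle'' is thus essentially the whole proof, and carrying it out requires the correct rational parametrisation rather than the linear one you propose.
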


The claim is no longer true for $F^m/\diag(F)$ with $m > 3$, by the parameter count.

\begin{theorem}\label{th:gointro}
A Ledger--Obata space with an invariant Riemannian metric is a geodesic orbit space if and only if it is naturally reductive.
\end{theorem}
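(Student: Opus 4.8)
The plan is to show that the geodesic orbit (GO) condition on $F^m/\diag(F)$ forces the metric into one of the two families of Theorem~\ref{th:natred}, since the converse (naturally reductive $\Rightarrow$ GO) is already noted in the introduction. First I would set up coordinates on the space of invariant metrics. Write a general $\Ad(H)$-invariant complement to $\h=\diag(\f)$ in $\g=m\f$ as a graph over a fixed reference complement, and use the $\Ad(H)$-invariance together with Schur's lemma (the adjoint representation of $F$ on $\f$ is irreducible, since $F$ is simple) to see that both the complement $\p$ and the inner product $(\cdot,\cdot)$ on it are encoded by finitely many real parameters: essentially an $(m-1)\times(m-1)$ symmetric positive-definite block ``matrix of multiples of $\langle\cdot,\cdot\rangle$'' after passing to the ideal $(m-1)\f$ as canonical model for $\p$. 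In other words, every invariant metric is of the form $(X,Y)\mapsto \langle AX,Y\rangle$ for a positive-definite symmetric operator $A$ on the ideal $(m-1)\f$ commuting with $\ad(\h)$, hence $A$ acts by a fixed positive-definite symmetric scalar matrix $(a_{ij})$ on the $m-1$ copies of $\f$. This reduces the whole problem to linear algebra plus the bracket relations of $m\f$.

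Next I would write down the GO equation explicitly in these coordinates. By \cite[Proposition~2.1]{KV}, the metric is GO iff for every $X\in\p$ there is $Z\in\h$ with $([X+Z,Y]_\p,X)=0$ for all $Y\in\p$; equivalently, $\mathrm{pr}_\p\big(A^{-1}[\![X,\cdot]\!]^{*}\big)$ — the metric gradient of the relevant quadratic — annihilates the appropriate subspace, so the GO condition says that for each $X$ the linear functional $Y\mapsto ([X,Y]_\p,X)$ on $\p$ lies in the span of the functionals $Y\mapsto ([Z,Y]_\p,X)$, $Z\in\h$. Plugging in the block-scalar form of $A$ and the explicit brackets of $F^m$ (the brackets only mix components diagonally: $[(X_i),(Y_i)]=([X_i,Y_i])$, and the projection to $\p$ is an explicit affine-linear combination determined by $A$), this becomes a family of polynomial identities in the $a_{ij}$ that must hold for all $X\in\f$. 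The key algebraic input here is that $\f$ is simple and non-abelian, so identities of the form ``$\sum c_\alpha \langle [X_\alpha, Y], X_\beta\rangle=0$ for all $X_\bullet,Y$'' translate, via non-degeneracy of the Killing form and the fact that $[\f,\f]=\f$, into linear (and in a couple of places quadratic) constraints on the coefficients $c_\alpha$. I would extract these constraints by choosing $X$ generic and also $X$ of special forms (supported on one or two coordinates), exactly as one does in the Einstein-metric analysis of \cite{Nik2002, CNN2017}.

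I would then solve the resulting system. I expect it to split into cases according to the rank/kernel structure of the matrix $(a_{ij})$ relative to the ``diagonal direction'': either the metric degenerates in the sense that $\p$ itself is an ideal — this is case \eqref{it:natredideal}, $\p=(m-1)\f$ with a diagonal inner product $\sum\beta_i\langle\cdot,\cdot\rangle_i$ — or $\p$ is the orthogonal complement of $\h$ with respect to an $\ad(\g)$-invariant quadratic form $Q=\sum\alpha_i\langle\cdot,\cdot\rangle_i$, which is case \eqref{it:natredQ}; in the latter the positivity/signature conditions \eqref{it:natredQ+}--\eqref{it:natredQ-} come out precisely as the requirement that $Q|_\p$ be positive definite (one computes that $Q$ restricted to the $Q$-orthogonal complement of $\h$ is positive definite iff either all $\alpha_i>0$, or exactly one $\alpha_j<0$ with $S=\sum\alpha_i<0$). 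Matching these two outcomes against Theorem~\ref{th:natred} then finishes the proof, because Theorem~\ref{th:natred} says those are exactly the naturally reductive metrics.

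The main obstacle, I expect, is the case analysis in the second step: the GO equations are ``for all $X$'' conditions with the auxiliary $Z\in\h$ allowed to depend on $X$, so one cannot simply linearize. The honest work is to show that the $X$-dependence of the compensating element $Z=Z(X)$ is forced to be linear in $X$ (or that $Z$ can be taken in a controlled finite-dimensional family), and then that the remaining bilinear system in the $a_{ij}$ has no solutions beyond the two naturally reductive families — in particular ruling out ``mixed'' metrics where $(a_{ij})$ is a generic positive-definite matrix not of the special forms above. This is where the simplicity of $\f$ is used most heavily (non-existence of invariant symmetric or antisymmetric bilinear maps $\f\times\f\to\f$ other than the obvious ones, and the structure of $\f\otimes\f$ as an $F$-module), and it is the computational heart of the argument; everything else is bookkeeping.
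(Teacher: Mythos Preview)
Your overall strategy --- show that the GO condition forces the metric into one of the naturally reductive families of Theorem~\ref{th:natred} --- is sound, and the paper follows the same broad logic. However, the execution differs substantially from what you outline, and the paper's route sidesteps the computational morass you correctly anticipate.

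The paper does not work with the matrix $(a_{ij})$ on the ideal $(m-1)\f$. Instead it passes to the Killing-orthogonal complement $\m$ and \emph{diagonalises} the metric endomorphism $A$: one writes $\m=\bigoplus_i \p_i$ with $A|_{\p_i}=\gamma_i\Id$, where each $\p_i = \{b^i\otimes X \mid X\in\f\}$ for an orthonormal system $b^1,\dots,b^{m-1}\in(1,\dots,1)^\perp\subset\br^m$. In these coordinates the bracket becomes $[\p_i,\p_j]=\{(b^i\diamond b^j)\otimes X\}$ with $\diamond$ componentwise multiplication, and the GO condition (via Lemma~\ref{l:brackets}) forces $b^i\diamond b^j\in\Span(b^i,b^j)$ whenever $\gamma_i\ne\gamma_j$ --- what the paper calls a \emph{super-adapted} system. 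A separate combinatorial lemma (Lemma~\ref{selfsatur2}) shows that even when eigenvalues repeat one can still choose the $b^i$ super-adapted.

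The ``solve the system'' step you flag as the crux is then handled not by generic $X$ but by a specific choice: embed $\su(2)\subset\f$ with basis $E_1,E_2,E_3$, take $X$ supported on three modules $\p_i,\p_j,\p_k$ with $Z_i=E_1$, $Z_j=E_2$, $Z_k=E_3$, and read off from $[X+Z,AX]=0$ the constraint $(1-\gamma_i/\gamma_j)a^i_j=(1-\gamma_i/\gamma_k)a^i_k$, where $b^i\diamond b^j=a^j_ib^i+a^i_jb^j$. This gives constants $C_i$ independent of $j$, and the resulting formula $Z_0=-\sum_j C_jZ_j$ is manifestly \emph{linear} in $X$; natural reductivity then follows immediately from \cite[Proposition~2.10]{KV}. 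The paper never solves for the metric parameters explicitly and does not match against Theorem~\ref{th:natred} directly --- it proves GO $\Leftrightarrow$ condition~(3) of Theorem~\ref{theoclass.1} $\Leftrightarrow$ naturally reductive.

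Your direct approach on $(a_{ij})$ could in principle succeed, but note that on $\g_m$ the naturally reductive metrics fall into \emph{three} patterns (Corollary~\ref{cor:natredgm}), not the two you expect from your rank/kernel dichotomy, and without first diagonalising $A$ the existential quantifier on $Z$ keeps the system genuinely nonlinear in the $a_{ij}$. The eigenbasis reduction together with the $\su(2)$ trick is precisely what linearises the problem and delivers the linearity of $Z(X)$ for free.
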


Next we address the question of characterizing Ledger--Obata spaces which are geodesic orbit \emph{manifolds}. We prove the following
decomposition theorem which also gives the description of the full connected isometry group of a Ledger--Obata space.

\begin{theorem} \label{th:red}
Let $F$ be a connected, compact, simple Lie group. A Ledger--Obata space $F^m/\diag(F)$ with an invariant metric is isometric to the product of
Ledger--Obata spaces $F^{m_i}/\diag(F), \; i=1, \dots, s$, with invariant metrics which are irreducible \emph{(}as Riemannian manifolds\emph{)},
where $s \ge 1, \; m_i > 1$, and $\sum_{i=1}^s (m_i-1)=m-1$; such a decomposition is unique, up to relabelling. Furthermore, the full connected
isometry group of $F^m/\diag(F)$ is the direct product $\prod_{i=1}^s F^{m_i}$, where each factor $F^{m_i}$ acts from the left on the corresponding
factor $F^{m_i}/\diag(F)$ and trivially on the other factors.
\end{theorem}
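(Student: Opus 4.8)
The plan is to reduce the statement to a computation of the isometry group of a left‑invariant metric on the compact semisimple Lie group $F^{m-1}$. First I would fix the last coordinate: the map $(x_1,\dots,x_m)\diag(F)\mapsto (x_1x_m^{-1},\dots,x_{m-1}x_m^{-1})$ identifies $M=F^m/\diag(F)$ diffeomorphically with the group $K:=F^{m-1}$ in such a way that the subgroup $F^{m-1}\times\{e\}\subset F^m$ acts by left translation on $K$, while $\{e\}\times F$ acts by simultaneous right translation by the diagonal $\diag(F)\subset K$. Hence any invariant metric $g$ on $M$ becomes a left‑invariant metric on $K$ which in addition is right–$\diag(F)$–invariant; the isotropy at $e\in K$ is $\diag(F)$ acting by the (diagonal) adjoint representation on $\mathfrak{k}=(m-1)\f$, and $g$ is encoded by the induced $\Ad(\diag F)$–invariant inner product $g_e$ on $\mathfrak{k}$.

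Next I would invoke the structure theory of the isometry group of a left‑invariant metric on a compact semisimple Lie group: $\operatorname{Isom}(K,g)^0=L(K)\cdot R(K_g)$, where $L(K),R(K)$ are the groups of left and right translations and $K_g=\{a\in K:R_a\in\operatorname{Isom}(K,g)\}$ is a closed subgroup with $\Lie(K_g)=\{X\in\mathfrak{k}:\ad_X \text{ is skew with respect to } g_e\}$; in particular $\operatorname{Isom}(K,g)^0$ is compact semisimple and contains $F^{m-1}\cdot R(\diag F)\cong F^m/\diag(Z(F))$, since $\ad_X$ is $g_e$–skew for $X\in\diag\f$ (and conjugation by $\diag(F)$ is an automorphism preserving $g_e$, hence an isometry). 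The key algebraic input is the classification of subalgebras of $(m-1)\f$ containing $\diag\f$: because $\f$ is simple, every such subalgebra is of the form $\bigoplus_{b=1}^{r}\diag_{P_b}(\f)$ for a partition $\{1,\dots,m-1\}=P_1\sqcup\dots\sqcup P_r$, the diagonal over the copies indexed by $P_b$. Applying this to $\Lie(K_g)$ produces such a partition, and since $\Ad(K_g)$ preserves $g_e$ while $\f$ carries no nonzero $\Ad(F)$–invariant linear functional, $g_e$ is forced to be block‑diagonal with respect to $\mathfrak{k}=\bigoplus_b\bigoplus_{j\in P_b}\f_j$; together with left‑invariance this shows $(K,g)$ is the Riemannian product of left‑invariant metrics on the factors $F^{|P_b|}$, each right–$\diag(F)$–invariant and hence a Ledger--Obata metric on $F^{|P_b|+1}/\diag(F)$. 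Setting $m_b=|P_b|+1$ gives $M\cong\prod_{b=1}^{r}F^{m_b}/\diag(F)$ with $\sum_b(m_b-1)=m-1$, and $\operatorname{Isom}(M)^0=L(K)\cdot R(K_g)=\prod_b F^{m_b}$ (up to the finite subgroup $\diag(Z(F))$), acting factor by factor as claimed.

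It remains to verify that each factor is irreducible and that the decomposition is unique. I would first establish the equivalence: $F^{m}/\diag(F)$ is Riemannian‑irreducible if and only if $K_g=\diag(F)$ (that is, $r=1$). One direction is immediate from the product description; for the converse, if $K_g=\diag(F)$ but $M=M_1\times\dots\times M_s$ de Rham splits with $s\ge 2$, then $\operatorname{Isom}(M)^0=\prod_i\operatorname{Isom}(M_i)^0$ (the identity component never permutes de Rham factors) would equal $F^m$, forcing $\operatorname{Isom}(M_i)^0=F^{|J_i|}$ for a partition $\{1,\dots,m\}=J_1\sqcup\dots\sqcup J_s$; but the isotropy $\diag(F)\subset F^m$ projects onto $\diag_{J_i}(F)\neq\{e\}$ in each factor, so $\diag(F)\supseteq\prod_i\diag_{J_i}(F)$, of dimension $s\dim\f>\dim\f$ — impossible. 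Applying this to each $F^{m_b}/\diag(F)$ above (its own $K_g$ must be $\diag(F)$, else $\operatorname{Isom}(M)^0$ would exceed the $\prod_b F^{m_b}$ just computed), all factors are irreducible, so they are the de Rham factors of $M$ and the multiset $\{m_b\}$ is unique once $M$ is simply connected. The latter holds when $F$ is simply connected; the general case follows by passing to $\tilde F^m/\diag(\tilde F)$, applying the result there, and pushing the product and isometry‑group structure down along the finite covering $\tilde F^m/\diag(\tilde F)\to F^m/\diag(F)$, whose deck group consists of left translations by central elements, hence lies in $\operatorname{Isom}^0$ and preserves each de Rham factor.

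The step I expect to be the main obstacle is the structural result for $\operatorname{Isom}(K,g)^0$ when $g$ is left‑invariant on a compact semisimple $K$: namely, controlling the isometries fixing the identity and showing the isotropy algebra is exactly $\ad(\Lie K_g)$, with no exotic extra Killing fields. This is the one point requiring a genuine argument rather than bookkeeping, and it relies essentially on $\f$ being semisimple ($\Der\f=\ad\f$ and $\f=[\f,\f]$). A secondary, more routine but fiddly, point is making the reduction from an arbitrary compact simple $F$ to the simply connected case fully rigorous, in particular checking that the deck group of the covering acts compatibly with the de Rham splitting and with the left/right translation picture on each factor.
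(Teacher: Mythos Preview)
Your plan is correct and follows essentially the same route as the paper: identify $F^m/\diag(F)$ with $F^{m-1}$ carrying a left-invariant, right-$\diag(F)$-invariant metric; determine the connected isometry group as $L(F^{m-1})\cdot R(K_g)$; classify the subalgebras of $(m-1)\f$ containing $\diag(\f)$ as products of diagonals over a partition (this is the paper's Lemma~\ref{l:subal}); deduce the block-orthogonality of the metric and the product decomposition; and check irreducibility of the factors.

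The only substantive differences are in the external input. For the step you correctly flag as the main obstacle---the structure of $\operatorname{Isom}(K,g)^0$ for left-invariant $g$ on compact semisimple $K$---the paper does not prove it from scratch but instead combines a result of Ozeki (to replace $K$ by an isomorphic copy that is \emph{normal} in the isometry group; this is Proposition~\ref{groupchange}) with a theorem of Ochiai--Takahashi (which, under that normality hypothesis, gives $G=L(K)\times R(K')$). For irreducibility of the factors the paper simply cites a result of Nomizu, whereas you sketch a direct de~Rham/isotropy-counting argument; your argument is sound and is a reasonable substitute. Your extra care with the passage from simply connected $F$ to general $F$ is not needed in the paper's version because the cited results already apply at the level of compact connected groups.
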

It follows that the full connected isometry group of an invariant metric on a Ledger--Obata space $F^m/\diag(F)$ is $F^k$, where $m \le k \le 2(m-1)$.
The cases $k = m$ and $k = 2(m-1)$ correspond to irreducible and bi-invariant metrics respectively. For a special class of metrics on Ledger--Obata spaces,
the full connected isometry group was computed in \cite{Kow77}.

From Theorem~\ref{th:red} and Theorem~\ref{th:gointro} we obtain the following Corollary.
\begin{corollary} \label{cor:gom}
Let $F$ be a connected, compact, simple Lie group. A Ledger--Obata space $F^m/\diag(F)$ is a geodesic orbit manifold if and only if each of the
factors $F^{m_i}/\diag(F)$ in its irreducible decomposition is naturally reductive \emph{(}so that the metric on each of them has the form given
in Theorem~\ref{th:natred}\emph{)}. 
\end{corollary}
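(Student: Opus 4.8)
The plan is to combine the two structural theorems directly. By Theorem~\ref{th:red}, the Ledger--Obata space $F^m/\diag(F)$ with its given invariant metric splits isometrically as a product $\prod_{i=1}^s F^{m_i}/\diag(F)$ of irreducible Riemannian factors, each carrying an invariant metric, and this decomposition is unique up to relabelling. I would first invoke the standard fact that a Riemannian product is a geodesic orbit manifold if and only if each factor is a geodesic orbit manifold. The forward direction of this fact is immediate from the de Rham decomposition: the identity component of the full isometry group of the product is the product of the identity components of the isometry groups of the de Rham factors (which, by Theorem~\ref{th:red}, are precisely the $F^{m_i}/\diag(F)$), so any one-parameter subgroup realizing a geodesic in a single factor lies in that factor's isometry group, and conversely a geodesic of the product projects to geodesics of the factors and hence, concatenating one-parameter subgroups, is itself an orbit. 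Thus $F^m/\diag(F)$ is a geodesic orbit manifold if and only if each $F^{m_i}/\diag(F)$ is.

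Next, each factor $F^{m_i}/\diag(F)$ is itself irreducible, so being a geodesic orbit manifold is the same as being a geodesic orbit space with respect to its full connected isometry group, which by Theorem~\ref{th:red} is $F^{m_i}$ acting from the left. Hence $F^{m_i}/\diag(F)$ is a geodesic orbit manifold if and only if it is a geodesic orbit space in the sense of the Ledger--Obata presentation $F^{m_i}/\diag(F)$. Now Theorem~\ref{th:gointro} applies verbatim to each factor: the Ledger--Obata space $F^{m_i}/\diag(F)$ with its invariant metric is a geodesic orbit space if and only if it is naturally reductive, and by Theorem~\ref{th:natred} this means precisely that the invariant metric on that factor has one of the forms listed there. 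Combining the three equivalences gives the claim.

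The only genuine point requiring care is matching the notions of ``geodesic orbit manifold'' and ``geodesic orbit space'' for the factors, i.e.\ checking that the full connected isometry group of $F^{m_i}/\diag(F)$ is exactly $F^{m_i}$ rather than something larger; but this is exactly the content of Theorem~\ref{th:red} applied to the irreducible space $F^{m_i}/\diag(F)$ (whose own irreducible decomposition is trivial, $s=1$), so no new work is needed. Everything else is a routine assembly of the quoted results, and I expect no substantive obstacle.
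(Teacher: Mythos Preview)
Your proposal is correct and follows essentially the same route as the paper's proof: use Theorem~\ref{th:red} to split the space and its full connected isometry group into a product, use that geodesics in a product correspond to geodesics in the factors so the GO-manifold property passes to and from the factors, and then apply Theorem~\ref{th:gointro} to each factor. Your extra care in identifying the full connected isometry group of each irreducible factor via Theorem~\ref{th:red} with $s=1$ is exactly the point the paper leaves implicit.
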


We add some remarks on invariant Einstein metrics on Ledger--Obata spaces. Recall that a Riemannian metric on a Ledger--Obata space $F^m/\diag(F)$ is called {\it standard} or {\it Killing} if it is induced by the minus Killing form of the Lie algebra of $F^m$. The Ricci curvature of the standard Riemannian metric $\rho_{st}$ on every Ledger--Obata space satisfies the equality $\Ric(\rho_{st})=\frac{m+2}{4m} \rho_{st}$, i.~e. $\rho_{st}$ is Einstein with the Einstein constant $\frac{m+2}{4m}$, see \cite{Nik2002} or \cite{CNN2017} for details. By Theorem 5.2 of Chapter~X of~\cite{KN}, we know that the Riemannian manifold $(F^m/\diag(F),\rho_{st})$ is irreducible. If (in the notation of Theorem~\ref{th:red}) an invariant Riemannian metric $\rho$ on $F^m/\diag(F)$ is the direct product of the multiples of the standard metrics on the factors $F^{m_i}/\diag(F)$, $i=1, \dots, s$, with the same Einstein constants, then $\rho$ is also Einstein. Such Einstein metrics are called {\it  routine} in \cite{CNN2017}. It is known that all Einstein invariant metrics on Ledger--Obata spaces are routine for $m \leq 4$. The questions about hypothetical non-routine Einstein metrics and geodesic orbit non-routine Einstein metrics on Ledger--Obata spaces were posed in \cite{CNN2017}. From Corollary~\ref{cor:gom}, Theorem~\ref{th:natred}, and \cite[Proposition 7.89]{Bes} we easily get that every geodesic orbit Einstein metric on any Ledger--Obata space is routine.

\smallskip
The paper is organised as follows. In Section~\ref{s:pre}, we give the necessary background. In Section~\ref{s:natred} we study naturally reductive
metrics on Ledger--Obata spaces; we prove Theorem~\ref{th:natred}, Corollary~\ref{cor:natredgm} and Proposition~\ref{ledob6.prop}. Geodesic orbit
Ledger--Obata spaces are studied in Section~\ref{s:go-legob}; we prove Theorem~\ref{theoclass.1} (of which Theorem~\ref{th:gointro} is a particular case)
using the so called super-adapted systems. In the first part of Section~\ref{s:LOm}, we find the full connected isometry group of a Ledger--Obata space and
prove Theorem~\ref{th:red}; then Corollary~\ref{cor:gom} follows. In the second part, we compute the holonomy of an invariant metric on a Ledger--Obata space
and give an  effective method of recognizing whether a given inner product produces a reducible metric.

\section{Preliminaries}
\label{s:pre}


\subsection{Geodesic orbit condition}
\label{ss:gocondition}

Consider a compact homogeneous space $G/H$ with a semisimple group $G$ and denote by $\ip$ the minus Killing form of the Lie algebra $\g=\Lie(G)$.
Let $\m$ be the $\ip$-orthogonal complement to $\h=\Lie(H)$ in $\g$. Any invariant Riemannian metric $g$ on $G/H$ is determined by a suitable $\Ad(H)$-invariant
inner product $(\cdot,\cdot)$ on $\m$ and vise versa. For any such metric we can canonically define \emph{the metric endomorphism} $A:\m \rightarrow \m$ such
that $(X,Y)=\<A X, Y \>$, for $X,Y \in \m$. The endomorphism $A$ is $\Ad(H)$-equivariant, positive definite and symmetric with respect to $\ip$.
We have the following useful lemma.

\begin{lemma}[{\cite[Proposition~1]{AA}}]\label{GO-criterion}
A compact homogeneous Riemannian space $(G/H,g)$ with the metric endomorphism $A$ is a geodesic orbit space if and only if for any $X \in \m$ there exists $Z \in \h$ such that $[X+Z, AX]\in \h$.
\end{lemma}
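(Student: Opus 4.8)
The plan is to deduce the statement from the Kowalski--Vanhecke criterion recalled in the introduction: $(G/H,g)$ is a geodesic orbit space if and only if for any $\Ad(H)$-invariant complement $\p$ to $\h$ in $\g$ and any $X\in\p$ there exists $Z\in\h$ such that $([X+Z,Y]_\p,X)=0$ for all $Y\in\p$ \cite[Proposition~2.1]{KV}. Applying this with the particular complement $\p=\m$, the only work left is to rewrite the scalar condition $([X+Z,Y]_\m,X)=0$ in terms of the metric endomorphism $A$ and the Killing form $\ip$.

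First I would pass from $(\cdot,\cdot)$ to $\ip$: since $(U,V)=\<AU,V\>$ for $U,V\in\m$, and $A$ is $\ip$-symmetric with $A(\m)\subseteq\m$, we obtain $([X+Z,Y]_\m,X)=\<A[X+Z,Y]_\m,X\>=\<[X+Z,Y]_\m,AX\>$; moreover $[X+Z,Y]_\h\perp\m\ni AX$, so this equals $\<[X+Z,Y],AX\>$. Next, the $\ad(\g)$-invariance of $\ip$ gives $\<[X+Z,Y],AX\>=-\<Y,[X+Z,AX]\>$. Hence, for a fixed $X\in\m$ and $Z\in\h$, the equalities $([X+Z,Y]_\m,X)=0$ for all $Y\in\m$ hold precisely when $[X+Z,AX]$ is $\ip$-orthogonal to $\m$, that is, when $[X+Z,AX]\in\h$, because $\h=\m^\perp$. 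Substituting this equivalence into the Kowalski--Vanhecke criterion yields the Lemma.

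No genuine obstacle arises; the points that need a little care are that the Kowalski--Vanhecke criterion is valid for \emph{any} $\Ad(H)$-invariant complement and so may be invoked with the non-arbitrary one $\m$, and the routine bookkeeping of the $\h$- and $\m$-components together with the sign in the $\ad$-invariance identity. It is also worth recording that $Z\in\h$, $AX\in\m$ and the $\Ad(H)$-invariance of $\m$ force $[Z,AX]\in\m$, so the whole computation stays within the fixed reductive decomposition $\g=\h\oplus\m$.
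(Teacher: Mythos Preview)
The paper does not give its own proof of this lemma; it merely cites \cite[Proposition~1]{AA}. Your derivation from the Kowalski--Vanhecke criterion is correct and is in fact the standard way to see the equivalence: the chain
\[
([X+Z,Y]_\m,X)=\<[X+Z,Y]_\m,AX\>=\<[X+Z,Y],AX\>=-\<Y,[X+Z,AX]\>
\]
is valid exactly as you wrote, and the final orthogonality-to-$\m$ condition is equivalent to $[X+Z,AX]\in\h$ since $\m=\h^\perp$. Your remark that the Kowalski--Vanhecke criterion may be invoked with the particular complement $\m$ is the only point worth noting, and it is handled correctly.
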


For any eigenvalue $\alpha$ of the metric endomorphism $A$, the corresponding eigenspace $\m_\alpha$ is $\Ad(H)$-invariant; the eigenspaces $\m_\a, \m_\beta, \; \a \ne \b$, are pairwise orthogonal with respect to both $(\cdot,\cdot)$ and $\ip$. Denote $\z_\h(X)$ the centraliser of $X \in \g$ in $\h$.

\begin{lemma}[{\cite[Proposition~5]{AN}, \cite[Section~5]{Nik2017}}] \label{l:brackets}
Suppose a compact homogeneous Riemannian space $(G/H,g)$ with the metric endomorphism $A$ is geodesic orbit. Then
\begin{enumerate} [label=\emph{(\roman*)}, ref=\roman*]

  \item \label{it:alphabeta}
  For any $X \in \m_\alpha, \; Y \in \m_\beta$ with $\alpha \ne \beta$, there exists $Z \in \h$ such that $[X,Y]=\frac{\alpha}{\beta-\alpha} [Z,X]+\frac{\beta}{\beta-\alpha} [Z,Y]$.

  \item \label{it:centralisers}
  For any $X \in \m_\alpha, \; Y \in \m_\beta$ with $\alpha \ne \beta$, there exist $Z_1 \in \z_\h(X), \; Z_2 \in \z_\h(Y)$ such that $[X,Y]= [Z_2,X] + [Z_1,Y]$.

  \item \label{it:twoinone}
  If $X, Y  \in \m_\alpha$ are such that $[\h,X] \perp Y$, then $[X, Y] \in \m_\alpha$.
\end{enumerate}
\end{lemma}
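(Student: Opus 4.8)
The plan is to deduce all three statements from the geodesic orbit criterion of Lemma~\ref{GO-criterion} by polarizing it. First I would record the elementary fact that on an eigenspace $\m_\alpha$ the metric endomorphism $A$ acts as the scalar $\alpha$, so for $X \in \m_\alpha$ and $Y \in \m_\beta$ we have $AX = \alpha X$ and $AY = \beta Y$. Applying Lemma~\ref{GO-criterion} to the vector $X + Y \in \m$ (for arbitrary $X \in \m_\alpha$, $Y \in \m_\beta$, not just unit vectors — the condition is homogeneous of degree two, so it holds for all vectors once it holds on the unit sphere), there exists $Z \in \h$ with
\[
[X + Y + Z,\ A(X+Y)] = [X+Y+Z,\ \alpha X + \beta Y] \in \h.
\]
Expanding, the $\m$-component of this bracket must vanish. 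The terms $[X, \alpha X]$ and $[Y, \beta Y]$ vanish, $[X, \beta Y] + [Y, \alpha X] = (\beta - \alpha)[X,Y]$, and the remaining terms are $[Z, \alpha X + \beta Y]$; since $[\h,\m]\subseteq\m$ and $[\h,\h]\subseteq\h$, taking $\m$-components gives $(\beta-\alpha)[X,Y] + \alpha[Z,X] + \beta[Z,Y] = 0$, which is exactly \eqref{it:alphabeta} after dividing by $\beta - \alpha \ne 0$.

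For \eqref{it:centralisers} I would refine the choice of $Z$. Starting from \eqref{it:alphabeta}, write $Z$ using the orthogonal (with respect to $\ip$) decomposition of $\h$ adapted to the adjoint actions of $X$ and of $Y$: decompose $\h = \z_\h(X) \oplus [\h, X]'$ appropriately, and similarly for $Y$. The point is that the component of $Z$ lying in $\z_\h(X)$ contributes nothing to $[Z,X]$, so only its complementary part matters there, and dually for $Y$; one then checks, using the $\ip$-invariance of the bracket and the fact that $[X,Y]$ is orthogonal to $\h$ in the relevant pairings, that $Z$ can be replaced by a sum $Z_1 + Z_2$ with $Z_1 \in \z_\h(X)$ and $Z_2 \in \z_\h(Y)$, so that $[X,Y] = [Z + (\text{correction})\,, \cdot]$ collapses to $[Z_2, X] + [Z_1, Y]$. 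This is the step I expect to be the main obstacle: the bookkeeping of which part of $Z$ may be absorbed into a centralizer requires care with the inner products $\ip$ and $(\cdot,\cdot)$ and with the $\Ad(H)$-equivariance of $A$, and one must verify that the coefficients $\frac{\alpha}{\beta-\alpha}$ and $\frac{\beta}{\beta-\alpha}$ from \eqref{it:alphabeta} are exactly what is needed to make the centralizer absorption consistent on both sides.

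For \eqref{it:twoinone}, take $X, Y \in \m_\alpha$ in the \emph{same} eigenspace with $[\h, X] \perp Y$, and apply Lemma~\ref{GO-criterion} to $X + tY$ for a parameter $t$; since $A(X+tY) = \alpha(X + tY)$, the criterion yields $Z = Z(t) \in \h$ with $[X + tY + Z, X + tY] \in \h$, hence $t[X,Y]_\m + [Z, X+tY]_\m = 0$ after the self-brackets cancel. Pairing with $Y$ using $\ip$ and the hypothesis $[\h,X]\perp Y$ kills the $[Z,X]$ term in the inner product, and a symmetry/positivity argument (or differentiating in $t$ at $t=0$, exploiting that $[Z(t),Y]$ also lands in a controlled subspace) forces $[X,Y]$ to have no component outside $\m_\alpha$; the cleanest route is to observe that $[X,Y]_\m$ is $\ip$-orthogonal to every $\m_\gamma$ with $\gamma\ne\alpha$ by repeatedly invoking \eqref{it:alphabeta} with one argument $X$ or $Y$ and the other in $\m_\gamma$, together with the orthogonality hypothesis. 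I would keep the argument short by citing the cancellation of self-brackets and the $\Ad(H)$-invariance of the eigenspace decomposition as the only facts needed beyond Lemma~\ref{GO-criterion}.
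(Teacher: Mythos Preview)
The paper does not actually prove this lemma --- it is quoted from \cite{AN} and \cite{Nik2017} without argument --- so there is no in-paper proof to compare against. Assessing your sketch on its own merits:

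Part~\eqref{it:alphabeta} is correct and is the standard polarisation of Lemma~\ref{GO-criterion} (replace $Z$ by $-Z$ to match the signs in the stated formula; note also that the identity is really about $[X,Y]_\m$).

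For part~\eqref{it:twoinone} your first approach fails outright: when $X,Y\in\m_\alpha$ one has $A(X+tY)=\alpha(X+tY)$, so
\[
[X+tY+Z,\,A(X+tY)]=\alpha\,[X+tY,X+tY]+\alpha\,[Z,X+tY]=\alpha\,[Z,X+tY],
\]
and there is \emph{no} $[X,Y]$ term at all, contrary to what you wrote. The GO criterion applied to a vector in a single eigenspace carries no information about brackets within that eigenspace. Your ``cleanest route'' afterthought is, however, the correct argument: for $W\in\m_\gamma$ with $\gamma\ne\alpha$, write $\langle[X,Y],W\rangle=-\langle Y,[X,W]\rangle$ by $\ip$-invariance, apply \eqref{it:alphabeta} to $[X,W]_\m$, and observe that both resulting terms vanish, one by the hypothesis $[\h,X]\perp Y$ and the other because $[\h,\m_\gamma]\subset\m_\gamma\perp\m_\alpha$.

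Part~\eqref{it:centralisers} has a genuine gap. Decomposing the single $Z$ from \eqref{it:alphabeta} relative to $\z_\h(X)$ and to $\z_\h(Y)$ separately does not produce $Z_1\in\z_\h(X)$ and $Z_2\in\z_\h(Y)$ with the required identity: in general $\z_\h(X)+\z_\h(Y)\ne\h$, and the leftover pieces have no reason to cancel --- the coefficients $\tfrac{\alpha}{\beta-\alpha},\tfrac{\beta}{\beta-\alpha}$ do not help here. The proofs in the cited references use a different idea: apply the GO criterion to $tX+Y$ for every $t>0$, obtaining $Z_t\in\h$ with $(\beta-\alpha)[X,Y]_\m+\alpha[Z_t,X]+\tfrac{\beta}{t}[Z_t,Y]=0$, and then use a compactness/limiting argument as $t\to 0$ and $t\to\infty$ to extract elements centralising $Y$ and $X$ respectively. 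That limiting step is the missing ingredient in your sketch.
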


\subsection{Linear holonomy algebra}
\label{ss:holo}

Let $G/H$ be a compact homogeneous space with a semisimple group $G$. In the notation of Section~\ref{ss:gocondition}, extend the metric endomorphism $A$ to the operator $C$ on $\g$ which is symmetric relative to $\ip$ and is defined by $C|_\m=A$ and $C|_\h=0$. For $Z \in \g$ let $D_Z:\m \rightarrow \m$ be defined by $D_Z (Y)= [Z, Y]_{\m}$, for $Y \in \m$, where the subscript $\m$ denotes the $\m$-component.

\begin{lemma}[{\cite[Theorem~2.3]{K}}]\label{reduce-criterion}
The linear holonomy algebra of $G/H$ is the Lie algebra generated by all the operators on $\m$ of the form
\begin{equation*}
\Gamma_Z:=D_Z+C^{-1}D_Z C- C^{-1}D_{CZ},
\end{equation*}
where $Z \in \g$.
\end{lemma}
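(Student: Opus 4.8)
The plan is to translate the statement into the language of the Levi--Civita (Nomizu) connection of the invariant metric and then carry out a holonomy computation of Kostant type.

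\smallskip
\emph{Step 1: $\Gamma_Z=2\Lambda_Z$.} Recall Nomizu's description of the Levi--Civita connection: on $\m$ it is the connection map $\Lambda_X=\tfrac12[X,\cdot]_\m+U_X$, where $U_X$ is $(\cdot,\cdot)$-symmetric and $2(U_XY,W)=([W,X]_\m,Y)+(X,[W,Y]_\m)$ for $W\in\m$; extend $\Lambda$ to $\g$ by $\Lambda_Z:=\ad(Z)|_\m=D_Z$ for $Z\in\h$. Writing $(\cdot,\cdot)=\<A\cdot,\cdot\>$ and using the $\ip$-symmetry of $C$, the orthogonality $\h\perp\m$ with $C|_\h=0$, and the $\ad$-invariance of $\ip$, one rewrites $2(U_XY,W)=\<W,[X,AY]_\m+[Y,AX]_\m\>$, so $U_XY=\tfrac12A^{-1}\big([X,AY]_\m+[Y,AX]_\m\big)$. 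A direct comparison (using $[Y,AX]_\m=-[AX,Y]_\m$) gives $\Gamma_X=2\Lambda_X$ for $X\in\m$; for $Z\in\h$ the $\Ad(H)$-equivariance of $A$ gives $A^{-1}D_ZA=D_Z$ and $D_{CZ}=0$, hence $\Gamma_Z=2D_Z=2\Lambda_Z$. Thus the claim becomes: the linear holonomy algebra $\mathfrak{hol}$ equals the Lie subalgebra $\mathfrak g'$ of $\so(\m,(\cdot,\cdot))$ generated by $\{\Lambda_Z:Z\in\g\}$.

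\smallskip
\emph{Step 2: $\mathfrak{hol}\subseteq\mathfrak g'$.} The homogeneous curvature formula $R(X,Y)=[\Lambda_X,\Lambda_Y]-\Lambda_{[X,Y]}$ for $X,Y\in\m$ (with $[X,Y]=[X,Y]_\h+[X,Y]_\m\in\g$) shows $R(X,Y)\in\mathfrak g'$. Since the covariant derivative of an invariant tensor is computed algebraically through the connection map, the iterated covariant derivatives of $R$ at $eH$ are the iterated brackets $[\Lambda_{Z_k},[\dots,[\Lambda_{Z_1},R(X,Y)]\dots]]$ with $Z_i\in\m$; by the Ambrose--Singer theorem these span $\mathfrak{hol}$, and they all lie in $\mathfrak g'$. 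A bracket-length induction (using that $[\Lambda_Z,R(X,Y)]\in\mathfrak{hol}$ for $Z\in\m$ and the $\Ad(H)$-equivariance of $R$ for $Z\in\h$) shows $\mathfrak{hol}$ is an ideal of $\mathfrak g'$. Since $[\Lambda_Z,\Lambda_W]\equiv\Lambda_{[Z,W]}\pmod{\mathfrak{hol}}$ for all $Z,W\in\g$, the map $Z\mapsto\Lambda_Z+\mathfrak{hol}$ is a surjective Lie algebra homomorphism $\g\to\mathfrak q:=\mathfrak g'/\mathfrak{hol}$; as $\g$ is semisimple, so is $\mathfrak q$.

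\smallskip
\emph{Step 3: $\mathfrak g'\subseteq\mathfrak{hol}$, i.e.\ $\mathfrak q=0$ --- the main obstacle.} Because $G$ is compact semisimple, $M=G/H$ is compact and has no flat de~Rham factor (a semisimple group cannot act transitively on a flat homogeneous space). The de~Rham distributions are parallel and, being canonical, $G$-invariant; hence $\mathfrak g'$ preserves the corresponding orthogonal splitting $\m=\bigoplus_i\m_i$ (the connection map and the isotropy action both preserve parallel $G$-invariant distributions), $\mathfrak{hol}=\bigoplus_i\mathfrak{hol}_i$ with $\mathfrak{hol}_i$ acting irreducibly on $\m_i$, and $\mathfrak g'$ projects onto subalgebras $\mathfrak g'_i\subseteq\so(\m_i)$ with $\mathfrak g'_i\supseteq\mathfrak{hol}_i$ and $\mathfrak{hol}_i\trianglelefteq\mathfrak g'_i$. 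As $\mathfrak g'_i\subseteq\so(\m_i)$ is reductive, $\mathfrak g'_i=\mathfrak{hol}_i\oplus\cs_i$ with $\cs_i$ centralising $\mathfrak{hol}_i$; by Schur's lemma $\cs_i$ lies in the skew part of the commutant field $\br$, $\bc$ or $\mathbb H$, so $\cs_i\in\{0,\ug(1),\su(2)\}$. Semisimplicity of $\mathfrak q$ (hence of $\cs_i$) rules out $\ug(1)$, while $\cs_i=\su(2)$ would make $\m_i$ an $\mathbb H$-module with $\mathfrak{hol}_i\subseteq\mathfrak{sp}(\dim_{\mathbb H}\m_i)$, i.e.\ the corresponding de~Rham factor would be a compact simply connected hyperk\"ahler manifold, hence Ricci-flat, hence flat by the Alekseevskii--Kimelfeld theorem --- contradicting irreducibility. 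Therefore $\cs_i=0$ and $\mathfrak g'_i=\mathfrak{hol}_i$ for every $i$, so $\mathfrak g'\subseteq\bigoplus_i\mathfrak{hol}_i=\mathfrak{hol}$. Combined with Step~2 this gives $\mathfrak{hol}=\mathfrak g'$, as required; the serious work is all in Step~3, since ruling out $\mathfrak q\ne0$ needs the global input that $M$ has no flat factor together with the classification-flavoured fact that a nontrivial centraliser of an irreducible holonomy representation forces a (hyper-)K\"ahler structure incompatible with compactness under a transitive semisimple group.
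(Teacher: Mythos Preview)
The paper does not give a proof of this lemma at all: it is quoted verbatim as \cite[Theorem~2.3]{K} and used as a black box. There is therefore nothing in the paper to compare your argument against.

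Your argument is essentially sound in the paper's setting (compact $G/H$ with $G$ compact semisimple), but it is very far from Kostant's 1957 proof, and several steps deserve tightening. In Step~2, the iterated covariant derivatives $(\nabla^k R)(Z_1,\dots,Z_k;X,Y)$ are \emph{not} literally the iterated brackets $[\Lambda_{Z_k},[\dots,R(X,Y)\dots]]$; what is true is that each lies in the span of the other together with lower-order curvature terms, which is enough for $\mathfrak{hol}\subset\mathfrak g'$ and for $\mathfrak{hol}\trianglelefteq\mathfrak g'$, but you should say so. In Step~3 you tacitly assume $M$ is simply connected when invoking the global de~Rham splitting and the product structure of the isometry group; since $G$ is compact semisimple, $\pi_1(G/H)$ is finite and the restricted holonomy is unchanged on the universal cover, so this is easily repaired, but it should be stated. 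The claim that $\mathfrak g'$ preserves the de~Rham splitting also needs a line: it follows because each $\Lambda_Z$ is the value at $o$ of a Killing field's covariant derivative, and Killing fields preserve parallel distributions. Finally, your elimination of $\cs_i\cong\su(2)$ via ``hyperk\"ahler $\Rightarrow$ Ricci-flat $\Rightarrow$ flat (Alekseevskii--Kimel'fel'd)'' is correct once you know the de~Rham factor $M_i$ is itself homogeneous, which follows from $\mathrm{Isom}^0(M_1\times M_2)=\mathrm{Isom}^0(M_1)\times\mathrm{Isom}^0(M_2)$ for a de~Rham product without flat factor.

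For perspective: Kostant's original proof predates both Berger's holonomy list and the Alekseevskii--Kimel'fel'd theorem, so it cannot and does not use them. His argument is direct: he shows (in your notation) that $\mathfrak{hol}$ is the ideal of $\mathfrak g'$ generated by curvature, and then that the quotient $\mathfrak q=\mathfrak g'/\mathfrak{hol}$ vanishes by exhibiting it as a subalgebra acting by infinitesimal isometries that centralise the full holonomy and hence act trivially under the effectiveness/semisimplicity hypothesis. Your route reaches the same conclusion but trades Kostant's short algebraic endgame for a substantially heavier toolkit.
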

Note that for $Z\in \h$, we get $\Gamma_Z=2 D_Z=2 \ad_Z|_\m$. For any $X \in \m_\a, \; Y \in \m_\b$ and $Z \in \m_{\gamma}$ we obtain
\begin{align*}
(\Gamma_Z X,Y) &=\< C (\Gamma_Z X),Y \>=\< C[Z,X],Y \>+\< [Z,CX],Y\> -\< [CZ,X],Y\> \\
&=(\alpha+\beta-\gamma)\< [Z,X],Y \>=(1+(\alpha-\gamma)\beta^{-1})([Z,X],Y).
\end{align*}
This gives a comprehensive description of the linear holonomy algebra. Note that although the subspaces of $\m$ invariant under the linear holonomy algebra are $\ad(\h)$-modules, they are not necessarily the sums of the eigenspaces $\m_\a$ (more precisely, they may not be $A$-invariant). However we have the following Lemma.

\begin{lemma}[{\cite[Lemma~3.4A]{K}}]\label{l:holinv}
If a subspace $\m_1 \subset \m$ is invariant under the linear holonomy algebra, and $\m_2$ is the $(\cdot,\cdot)$-orthogonal complement to $\m_1$ in $\m$, then both $\g_1=\h \oplus \m_1$ and $\g_2=\h \oplus \m_2$ are subalgebras in $\g$.
\end{lemma}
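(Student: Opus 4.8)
The plan is to translate the hypothesis through Lemma~\ref{reduce-criterion} and then extract one bracket identity from the explicit formula for $\Gamma_Z$. Since the linear holonomy algebra is generated by the operators $\Gamma_Z$, $Z\in\g$, the subspace $\m_1$ is invariant under it precisely when $\Gamma_Z\m_1\subseteq\m_1$ for every $Z\in\g$. Applying this with $Z\in\h$ and recalling that $\Gamma_Z=2\ad_Z|_\m$, we see first that $\m_1$ is an $\ad(\h)$-submodule of $\m$; hence, to prove that $\g_1=\h\oplus\m_1$ is a subalgebra, it remains only to verify that $[\m_1,\m_1]_\m\subseteq\m_1$ (the $\h$-components of brackets lie in $\h$ automatically), and likewise for $\g_2$.

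The key point is the identity
\begin{equation*}
\Gamma_X Y-\Gamma_Y X=2\,[X,Y]_\m ,\qquad X,Y\in\m .
\end{equation*}
To verify it, I would note that $C^{-1}$ is only ever applied to elements of $\m$, where it is the honest inverse $A^{-1}$, and write
\begin{equation*}
\Gamma_X Y=[X,Y]_\m+C^{-1}\bigl([X,CY]_\m+[Y,CX]_\m\bigr),
\end{equation*}
using $-[CX,Y]_\m=[Y,CX]_\m$, together with the analogous expression for $\Gamma_Y X$. The term inside the parentheses is symmetric under $X\leftrightarrow Y$, so the two $C^{-1}$-terms coincide and cancel in the difference, while $[X,Y]_\m-[Y,X]_\m=2\,[X,Y]_\m$. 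Now if $X,Y\in\m_1$, then $\Gamma_X Y,\Gamma_Y X\in\m_1$ by hypothesis, so $[X,Y]_\m=\frac12(\Gamma_X Y-\Gamma_Y X)\in\m_1$; combined with the previous paragraph this shows that $\g_1$ is a subalgebra.

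For $\g_2$ it suffices to know that $\m_2$ is again invariant under every $\Gamma_Z$, which holds because each $\Gamma_Z$ is skew-symmetric with respect to $(\cdot,\cdot)$ — the linear holonomy algebra lies in $\so(\m,(\cdot,\cdot))$ — so the $(\cdot,\cdot)$-orthogonal complement of a $\Gamma$-invariant subspace is $\Gamma$-invariant. If one prefers to avoid quoting this general fact about Riemannian holonomy, the skew-symmetry follows directly from the formula $(\Gamma_Z X,Y)=(\alpha+\beta-\gamma)\<[Z,X],Y\>$ on $A$-eigenvectors together with the $\ad$-invariance of $\ip$. Applying the argument of the previous paragraph to $\m_2$ then yields $[\m_2,\m_2]_\m\subseteq\m_2$, so $\g_2$ is a subalgebra as well. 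In short, once Lemma~\ref{reduce-criterion} is in hand the statement is nearly immediate; the only things to get right are the sign bookkeeping in the displayed identity and the skew-symmetry of the $\Gamma_Z$, so I do not anticipate a genuine obstacle. (One could instead argue geometrically, via the de Rham-type decomposition — a holonomy-invariant $\m_1$ defines a parallel, hence totally geodesic, $G$-invariant distribution on $G/H$, with both it and its orthogonal complement integrable and totally geodesic — but the computation above is shorter.)
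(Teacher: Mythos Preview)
The paper does not give its own proof of this lemma; it simply cites it from Kostant \cite[Lemma~3.4A]{K}. Your argument is correct and self-contained within the framework the paper has already set up. The identity $\Gamma_X Y-\Gamma_Y X=2\,[X,Y]_\m$ is precisely the algebraic manifestation of the torsion-freeness of the Levi--Civita connection (the operators $\tfrac12\Gamma_Z$ are the Nomizu connection operators), so this is the natural route and is in the same spirit as Kostant's original proof. Your verification of the skew-symmetry of $\Gamma_Z$ with respect to $(\cdot,\cdot)$ directly from the eigenvector formula displayed after Lemma~\ref{reduce-criterion} is also valid and has the merit of keeping everything internal to the paper.
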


The formula for the full linear holonomy group is given in \cite[Theorem~3.3]{K}.

\subsection{Invariant metrics on Ledger--Obata spaces}
\label{ss:metrics}

Let $F$ be a connected, compact, simple Lie group, $G=F^m$ and $H=\diag(F)\subset G=F^m$. Let $\f = \Lie(F)$ and $\h = \diag(\f) = \{(X,X,\dots,X)\,|\, X\in \f\} = \Lie(H)$. The space of all invariant Riemannian metrics on the space $G/H=F^m/\diag(F)$ is naturally identified with the set of all $\Ad(H)$-invariant inner products on any $\Ad(H)$-invariant complement to $\h$ in $\g$.

For $a=(a_1, a_2, \dots a_m) \in \br^m$ and $X \in \f$, denote $a \otimes X$ the vector $(a_1 X, a_2 X, \dots a_m X) \in \g$. Denote $\ip$ minus the Killing form on $\g=m\f$, and $\ip_i$, its restriction to the $i$-th copy of $\f$ in $m\f$.

From the following lemma we see that there are many choices of an $\Ad({H})$-invariant complement to $\h$ in $\g=m \f$.

\begin{lemma}[{\cite[Lemma~1.1]{Nik2002}}]\label{strinmod}
Every irreducible $\Ad({H})$-invariant submodule of $\g$ has the form
\begin{equation}\label{irred}
\left\{(\alpha_1, \alpha_2,\dots, \alpha_{m}) \otimes X \subset \g\,|\, X \in \f \right\}
\end{equation}
for some fixed $\alpha_i \in \mathbb{R}$.
\end{lemma}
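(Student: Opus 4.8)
The plan is to analyse the action of $\Ad(H)=\Ad(\diag(F))$ on $\g = m\f$ by decomposing $\g$ into $\Ad(H)$-irreducible summands and invoking Schur's lemma. Since $F$ is simple and compact, $\f$ is a simple $\Ad(F)$-module over $\br$; the diagonal $H$ acts on each copy of $\f$ in $\g = \f \oplus \dots \oplus \f$ by the (same) adjoint representation, so $\g$ is a direct sum of $m$ pairwise isomorphic copies of the irreducible real $H$-module $\f$. First I would fix an explicit $H$-module isomorphism $\br^m \otimes \f \xrightarrow{\sim} \g$, $(a_1,\dots,a_m)\otimes X \mapsto (a_1X,\dots,a_mX)$, under which $H$ acts only on the second tensor factor. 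Any $\Ad(H)$-invariant submodule $W \subseteq \g$ corresponds to an $H$-submodule of $\br^m \otimes \f$, i.e.\ to a subspace of $\Hom_H(\f, \g) \cong \br^m$ after tensoring; for $W$ irreducible this subspace is one-dimensional, spanned by some $(\alpha_1,\dots,\alpha_m) \in \br^m$, which gives exactly the form \eqref{irred}.

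Concretely, the key steps are: (1) record that $\Der(\f)=\ad(\f)$ and that the only $\Ad(F)$-equivariant endomorphisms of $\f$ are the scalars $\la\,\id$, $\la \in \br$ — this is the real Schur lemma, valid because $\f$ is compact simple hence of real type (its complexification is irreducible); (2) observe that $\Hom_H(\f,\g)$ is spanned by the $m$ inclusions $X \mapsto (0,\dots,X,\dots,0)$, so every $H$-equivariant map $\f \to \g$ has the form $X \mapsto (\alpha_1 X,\dots,\alpha_m X) = (\alpha_1,\dots,\alpha_m)\otimes X$ for a unique $(\alpha_1,\dots,\alpha_m)\in\br^m$; (3) given an irreducible $\Ad(H)$-submodule $W\subseteq\g$, pick any nonzero $H$-equivariant map $\f \to W \subseteq \g$ (it exists and is injective since $\f$ is irreducible), conclude $W$ is its image, which is of the stated form. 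One should also note the converse — that every subspace of the form \eqref{irred} with $(\alpha_i)\neq 0$ is indeed an irreducible $\Ad(H)$-submodule, isomorphic to $\f$ as an $H$-module — though this direction is immediate.

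I expect the only genuine subtlety to be the reality issue in Step (1): Schur's lemma over $\br$ a priori allows the endomorphism algebra of an irreducible module to be $\br$, $\bc$, or $\mathbb H$, and one must use that a compact simple real Lie algebra has irreducible (hence real-type) complexified adjoint representation — equivalently, that $\f$ carries no $\Ad(F)$-invariant complex structure — to rule out the $\bc$ and $\mathbb H$ cases and pin the commutant down to $\br$. Once that is in hand, the argument is a routine application of the tensor-product description of $H$-submodules of $\br^m \otimes \f$ and reduces to linear algebra over $\br^m$. The remaining work — checking $\Ad(H)$-invariance of \eqref{irred} and that distinct rays $\br(\alpha_i)$ give the same $H$-module up to isomorphism — is direct verification requiring no new idea.
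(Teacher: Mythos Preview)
The paper does not prove this lemma; it simply cites \cite[Lemma~1.1]{Nik2002}. Your argument is correct and is the standard one: identify $\g$ with $\br^m\otimes\f$ as an $H$-module, use that $\End_H(\f)=\br$ because $\f$ is compact simple (so $\f_\bc$ is simple and the adjoint module is of real type), deduce $\Hom_H(\f,\g)\cong\br^m$, and conclude that every irreducible submodule is the image of some $X\mapsto(\alpha_1 X,\dots,\alpha_m X)$. The only point worth tightening is the existence step in (3): you should say explicitly that since $\g$ is a completely reducible $H$-module all of whose irreducible constituents are isomorphic to $\f$, any irreducible submodule $W$ is itself isomorphic to $\f$, which is what furnishes the nonzero $H$-map $\f\to W$. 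With that remark the proof is complete.
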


In particular, $\h=\diag(\f)=\{(X,X,\dots,X)\,|\, X \in \f\}$ is one such submodule. We can choose arbitrary $m-1$ linear independent vectors $(\alpha_1, \alpha_2, \dots, \alpha_{m}) \in \br^m$ whose span does not contain the vector $(1,1,\dots, 1)$ and then construct the $\Ad({H})$-invariant complement to $\h$ in $\g$ as the direct sum of the corresponding $m-1$ irreducible submodules defined by \eqref{irred}. Throughout the paper we will mostly use the following two choices. 

Let $\g_k, \; k=1,\dots,m$, be the ideal of $m \f$ of the form $\f\oplus\dots \oplus \f\oplus 0 \oplus \f\oplus\dots \oplus \f$, where $0$ replaces the $k$-th copy of $\f$ in $m \f$. Any of $\g_k$ can be chosen as an $\Ad(H)$-invariant complement to $\h$ in $\g$; we will often use the submodule $\p=\g_{m}$. Any inner product $(\cdot, \cdot)$ on $\p$ is determined by
\begin{equation} \label{eq:ineerp}
\bigl((x_1,x_2,\dots,x_{m-1},0) \otimes Z, (x_1,x_2,\dots,x_{m-1},0) \otimes Z \bigr)=f(x_1,x_2,\dots,x_{m-1})\cdot \langle Z, Z \rangle,
\end{equation}
for any $x_1,\dots, x_{m-1} \in \mathbb{R}, \; Z \in \f$, where $f(x_1,x_2,\dots,x_{m-1})=\sum_{i,j=1}^{m-1} a_{ij}\, x_i x_j, \; a_{ij}\in \mathbb{R}$,  $a_{ij} = a_{ji}$, is a positive definite quadratic form.

Another natural choice of an $\Ad(H)$-invariant complement is the $\ip$-orthogonal complement to $\h$ in $\g$ which we already met before is defined by
\begin{equation} \label{eq:defm}
\m=\Big\{\Span \bigl((\alpha_1, \alpha_2,\dots, \alpha_{m}) \otimes X\bigr)\subset \g \, | \, X \in \f,\; \sum\nolimits_{i=1}^m \alpha_i =0 \Big\}.
\end{equation}


We introduce the notation which will be used in the proof of the following Lemma and also further in Section~\ref{s:go-legob}. For vectors  $a=(a_1,a_2,\dots,a_n), \; b=(b_1,b_2,\dots,b_n)  \in \mathbb{R}^n$, denote $a\diamond b$ the vector $c=(c_1,c_2,\dots,c_n)\in \mathbb{R}^n$ such that $c_i=a_ib_i$, for $i=1,\dots,n$.

\begin{lemma} \label{l:subal}
Let $\f$ be a simple, compact Lie algebra and $n \in \mathbb{N}$. Suppose a Lie subalgebra $\kg \subset n \f$ contains the subalgebra $\diag(\f) \subset n \f$. Then there is a decomposition $n \f = \oplus_{i=1}^s n_i \f$ into the direct sum of ideals \emph{(}where $s \ge 1, \; n_i \ge 1$ and $\sum_{i=1}^s n_i = n$\emph{)} such that $\kg = \kg_1 \oplus \kg_2 \oplus \cdots \oplus \kg_s$, where $\kg_i=\diag(\f) \subset n_i \f$.
\end{lemma}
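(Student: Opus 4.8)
The plan is to analyze the subalgebra $\kg$ through its "coordinate projections". Write $n\f = \f_1 \oplus \f_2 \oplus \cdots \oplus \f_n$, and let $\pi_i \colon n\f \to \f_i$ be the projection onto the $i$-th copy. Since $\diag(\f) \subseteq \kg$, the projection $\pi_i(\kg) = \f_i$ for every $i$, so $\kg$ is a \emph{subdirect product} of $n$ copies of $\f$. The structure of subdirect products of a simple Lie algebra is governed by a Goursat-type lemma: because $\f$ is simple, for each pair $i \neq j$ the image of $\kg$ under $(\pi_i, \pi_j) \colon n\f \to \f_i \oplus \f_j$ is either all of $\f_i \oplus \f_j$ or the graph of an automorphism $\varphi_{ij} \colon \f_i \to \f_j$. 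Define a relation on the index set $\{1,\dots,n\}$ by declaring $i \sim j$ iff $(\pi_i,\pi_j)(\kg)$ is a graph (with $i \sim i$ always). The first key step is to check this is an equivalence relation — symmetry is clear, and transitivity follows from composing graphs, using simplicity to rule out the "full product" alternative once two graph relations are chained.

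Next I would pin down the automorphisms using the hypothesis $\diag(\f) \subseteq \kg$. For $i \sim j$, the diagonal element $(X, X, \dots, X)$ lies in $\kg$ and projects to $(X, X)$ in $\f_i \oplus \f_j$, which must lie on the graph of $\varphi_{ij}$; hence $\varphi_{ij} = \id$. So on each equivalence class $I = \{i_1 < i_2 < \cdots < i_k\}$, the image of $\kg$ in $\bigoplus_{i \in I} \f_i$ is contained in — and, by projecting the diagonal, contains — the diagonal $\diag(\f) \subseteq k\f$. The second key step is to verify that $\kg$ is exactly the direct sum of these diagonals over the equivalence classes: if $I_1, \dots, I_s$ are the classes with $|I_\ell| = n_\ell$, then $\kg \subseteq \bigoplus_{\ell=1}^s \diag(\f)_\ell$ by construction, and for the reverse inclusion one uses that whenever $i \not\sim j$ the pair projection is \emph{onto} $\f_i \oplus \f_j$, together with a short induction (again invoking simplicity of $\f$) to produce, for each class $I_\ell$ and each $X \in \f$, an element of $\kg$ equal to $(X,\dots,X)$ on the coordinates in $I_\ell$ and $0$ elsewhere. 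Setting $n_\ell\f = \bigoplus_{i \in I_\ell} \f_i$ gives the decomposition $n\f = \oplus_{\ell=1}^s n_\ell\f$ into ideals with $\kg_\ell = \diag(\f) \subseteq n_\ell\f$.

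The main obstacle I anticipate is the reverse-inclusion step: showing that the "full product" behaviour across distinct classes does not force extra elements into $\kg$ but rather, conversely, lets us reconstruct all the class-diagonals. Concretely, one needs a clean inductive argument of the form: given that $\pi_{I_\ell}(\kg)$ (the image in $\bigoplus_{i\in I_\ell}\f_i$) equals $\diag(\f)$, and that for $j \notin I_\ell$ the joint projection to any one coordinate of $I_\ell$ together with $\f_j$ is surjective, one can successively kill off the $j$-coordinates while keeping the $I_\ell$-coordinates diagonal — this is where simplicity of $\f$ (so that $\f$ has no proper nonzero ideals, and $\ad$-invariant subspaces of $\f \oplus \f$ are constrained) does the real work. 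Everything else is bookkeeping: verifying that the $n_\ell\f$ are genuinely ideals (immediate, as they are sums of coordinate copies) and that $\sum_\ell n_\ell = n$ and $s \ge 1$, $n_\ell \ge 1$ (immediate, as the classes partition $\{1,\dots,n\}$).
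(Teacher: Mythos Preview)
Your proposal is correct but takes a genuinely different route from the paper. The paper bypasses Goursat entirely: using Lemma~\ref{strinmod} it observes at once that any $\ad(\diag(\f))$-submodule of $n\f$ has the form $U\otimes\f$ for a linear subspace $U\subset\br^n$, and that $\kg$ being a subalgebra forces $U$ to be closed under the coordinate-wise product $\diamond$. The whole problem then becomes a short combinatorial argument in $\br^n$: take a nonzero $\alpha\in U$ with the maximal number of zero coordinates, show (via $\alpha\diamond\beta-\beta_1\alpha$) that all $\beta\in U$ are constant on the support of $\alpha$, split off that block, and iterate. Your approach instead builds the partition via pairwise projections and an equivalence relation, which is more conceptual and connects to the classical subdirect-product picture, but it front-loads work (transitivity of $\sim$, pinning the automorphisms to the identity) only to arrive at the reverse-inclusion step---and there your ``successively kill off the $j$-coordinates'' sketch is a little optimistic: subtracting an element with prescribed $\ell$- and $j$-components does not control the remaining components, so one cannot na\"ively peel off coordinates one at a time. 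The step can certainly be completed (e.g.\ by induction on the number $s$ of equivalence classes, after noting that $\kg$ sits inside $s\f$ as $U\otimes\f$ with $U$ $\diamond$-closed and pairwise-surjective), but the cleanest completion is essentially the paper's argument. In short: the paper's route is shorter and more elementary; yours is more structural but ultimately needs the same linear-algebra endgame.
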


\begin{proof}
The subalgebra $\kg$ is an $\ad(\diag(\f))$-submodule of $n \f$, and so by Lemma~\ref{strinmod}, there is a linear subspace $U \subset \mathbb{R}^{n}$ such that $\{(\alpha_1, \alpha_2,\dots, \alpha_{n}) \otimes X \,|\, X \in \f\} \subset \kg$ if and only if $\alpha =(\alpha_1, \alpha_2, \cdots, \alpha_{n})\in U$. Since $\kg$ is a subalgebra, we have $\alpha \diamond \beta \in U$, for every $\alpha,\beta \in U$. For $l=1, \dots, n$, denote $t_l=(1, \dots, 1, 0, \dots, 0)$ the vector in $\br^n$ whose first $l$ entries are ones and the remaining $n-l$ are zeros. By assumption, $t_n \subset U$. If $U = \br t_n$, there is nothing to prove. Otherwise, there exist non-zero vectors in $U$ some of whose entries are zeros. Let $\alpha \in U$ be a non-zero vector with the maximal number of zero entries. Up to relabelling we may assume that $\alpha_i \neq 0$ for $i\leq l \in \mathbb{N}$ and $\alpha_i=0$ for $i>l$. Then for all $\b \in U$ (including $\b=\a$) we have $\b_1=\b_2=\dots =\b_l$, as otherwise $\alpha \diamond \b -  {\b_1} \alpha \in U$ is non-zero and has more zero entries than $\a$. It follows that $U = \br t_l \oplus U'$, where $U'$ is a linear subspace in $\br^n$ consisting of vectors all of whose first $l$ entries are zeros, which is closed under $\diamond$ and which contains the vector $t_n-t_l$. Repeating the argument we obtain a sequence $l=l_1 < l_2 < \dots < l_s=n$ such that $U = \Span(t_{l_1}, t_{l_1}-t_{l_2}, \dots, t_{l_s}-t_{l_{s-1}})$, and the claim follows if we define $n_1=l_1$ and $n_i=l_i-l_{i-1}$ for $i=2, \dots, s$.
\end{proof}

\section{Naturally reductive metrics on Ledger--Obata spaces}
\label{s:natred}

In this section, we prove Theorem~\ref{th:natred} and Proposition~\ref{ledob6.prop} and also find all inner products on $\g_m$ which produce naturally reductive metrics on the Ledger--Obata space $F^m/\diag(F)$.

\begin{proof}[Proof of Theorem~\ref{th:natred}]
Suppose $\p$ is an $\ad(\h)$-invariant complement to $\h$ in $\g$. Then the space $\mathfrak{q}=[\p,\p]+\p$ is an ideal in $\g$.

By \cite[Theorem~4]{K56}, if a naturally reductive metric is generated by a pair $(\p, (\cdot ,\cdot ))$, then there is a (unique) $\ad(\mathfrak{q})$-invariant non-degenerate quadratic form $Q$ on $\mathfrak{q}$ such that
\begin{equation}\label{eq:Kostant}
    Q(\p, \mathfrak{q} \cap \h)=0, \qquad \text{and} \qquad Q|_{\p}=(\cdot ,\cdot ).
\end{equation}
The converse is also true: if $Q$ is an $\ad(\mathfrak{q})$-invariant non-degenerate quadratic form which satisfies the first equation of \eqref{eq:Kostant} whose restriction to $\p$ is positive definite, then that restriction defines a naturally reductive metric; this follows from $\ad(\mathfrak{q})$-invariancy of $Q$ and from the fact that $\mathfrak{q}$ is complemented in $\g$ by an ideal.

By the dimension count, we have two cases: $\mathfrak{q}=\p$ and $\mathfrak{q}=\g$. In the first case, $\p$ by itself is an ideal, so $\p=(m-1)\f$ and $Q=
(\cdot ,\cdot )$ has the form given in \eqref{it:natredideal}. In the second case, $Q$ is $\ad(\g)$-invariant, so $Q=\sum_{i=1}^{m} \alpha_i \ip_i$, with $\alpha_i \ne 0$. Then by \eqref{eq:Kostant} the space $\p$ is the direct sum of the $\h$-modules $\p_j=\{(t_1^j, t_2^j, \dots, t_m^j) \otimes X \, | \, X \in \f\}, \; j=1, \dots, m-1$, such that $\sum_{i=1}^m \alpha_i t_i^j=0$ and that the rank of the $(m-1) \times m$ matrix $T=(t_i^j)$ is $m-1$. Then $(\cdot ,\cdot )$, the restriction of $Q$ to $\p$, is given by $((t_1^j, t_2^j, \dots, t_m^j) \otimes X, (t_1^k, t_2^k, \dots, t_m^k) \otimes Y)=\sum_{i=1}^m \alpha_i t_i^j t_i^k \<X, Y\>$, and so $(\cdot ,\cdot )$ is positive definite if and only if the $(m-1) \times (m-1)$-matrix $T \diag(\alpha_1, \dots, \alpha_m) T^t$ is. Let $\tilde{T}$ be the $m \times m$-matrix obtained by adding to $T$ an extra row of $m$ ones at the bottom. Then we have
\begin{equation*}
    \tilde{T} \diag(\alpha_1, \dots, \alpha_m) \tilde{T}^t= \left(
                                                               \begin{array}{cc}
                                                                 T \diag(\alpha_1, \dots, \alpha_m) T^t & 0 \\
                                                                 0 & \sum_{i=1}^m \alpha_i \\
                                                               \end{array}
                                                             \right).
\end{equation*}
and so by the law of inertia, $(\cdot ,\cdot )$ is positive definite if and only if either $\alpha_i > 0$, for all $i=1, \dots, m$, or exactly one of $\alpha_i$ is negative and $\sum_{i=1}^m \alpha_i < 0$.
\end{proof}

\begin{remark} \label{rem:normal}
Note that a naturally reductive metric on $F^m/\diag(F)$ is normal if and only if it corresponds to the pairs $(\p, (\cdot ,\cdot ))$ in \eqref{it:natredideal} and \eqref{it:natredQ}\eqref{it:natredQ+} of Theorem~\ref{th:natred}.
\end{remark}

We now consider the presentations of all the naturally reductive metrics on the same $\h$-submodule $\g_m=\f\oplus \f \oplus \dots \oplus \f \oplus 0$ complementary to $\h$. In the notation of Section~\ref{ss:metrics} we have the following.

\begin{corollary} \label{cor:natredgm}
Let $F$ be a connected, compact, simple Lie group and let an invariant metric on a Ledger--Obata space $F^m/\diag(F)$ be defined by an inner product $(\cdot, \cdot)$ on $\g_m$ for a positive definite quadratic form $f(x_1,x_2,\dots,x_{m-1})=\sum_{i,j=1}^{m-1} a_{ij}\, x_i x_j$ on $\br^{m-1}$. The metric is naturally reductive if and only if
\begin{enumerate}[label=\emph{(\alph*)},ref=\alph*]
  \item \label{it:natredgmgm}
  either $a_{ii} > 0$ and $a_{ij}=0$ for $i \ne j$;

  \item \label{it:natredgmgk}
  or there exists $k=1, \dots, m-1$ such that $-a_{ik}=a_{ii} > 0$ for $i \ne k$, $a_{ij}=0$ for $i \ne j \ne k \ne i$, and $a_{kk} > \sum_{i \ne k} a_{ii}$;
  \item \label{it:natredgmQ}
  or $a_{ij} = \delta_{ij} \alpha_i - \frac{\alpha_i\alpha_j}{S}$, where $\alpha_1, \dots, \alpha_m \in \br \setminus \{0\}, \; S=\sum_{i=1}^m\alpha_i$, and either $\alpha_i > 0$, for all $i=1, \dots, m$, or exactly one of $\alpha_i$ is negative and $S < 0$.
\end{enumerate}
\end{corollary}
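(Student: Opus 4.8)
The plan is to read the Corollary as a translation of Theorem~\ref{th:natred} into the coordinates attached to the fixed complement $\g_m$. The basic observation is that an invariant metric generated by a pair $(\p,(\cdot,\cdot))$ is equally well generated by the pair $\bigl(\g_m,\pi^*(\cdot,\cdot)\bigr)$, where $\pi\colon\g_m\to\p$ is the restriction to $\g_m$ of the projection $\g=\h\oplus\p\to\p$: since $\pi$ is an $\Ad(H)$-module isomorphism, $\pi^*(\cdot,\cdot)$ is an $\Ad(H)$-invariant inner product on $\g_m$ defining the same metric. So for each case of Theorem~\ref{th:natred} I would write $\pi$ down explicitly in the coordinates of Section~\ref{ss:metrics}, pull the given inner product back to $\g_m$, read off the quadratic form $f$, and then check that the resulting conditions on $(a_{ij})$ are also sufficient.

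In case \eqref{it:natredideal} of Theorem~\ref{th:natred}, $\p=(m-1)\f$ is an ideal of $m\f$; since the ideals of $m\f$ are the subsums of the $\f$'s and $\p$ is complementary to $\h$, one has $\p=\g_k$ for some $k\in\{1,\dots,m\}$ and $(\cdot,\cdot)=\sum_{i\ne k}\beta_i\ip_i$ with $\beta_i>0$, where $\ip_i$ is minus the Killing form on the $i$-th copy of $\f$. If $k=m$ then $\pi=\id$ and $f(x_1,\dots,x_{m-1})=\sum_i\beta_i x_i^2$, which is \eqref{it:natredgmgm}. If $k\le m-1$, then $\pi$ sends $(x_1,\dots,x_{m-1},0)\otimes Z$ to $(x_1-x_k,\dots,x_{m-1}-x_k,-x_k)\otimes Z$ (with zero $k$-th entry), and pulling back gives
\[
f(x_1,\dots,x_{m-1})=\sum_{i\ne k,\ i\le m-1}\beta_i(x_i-x_k)^2+\beta_m x_k^2 ,
\]
whose coefficients are exactly $a_{ii}=\beta_i$, $a_{ik}=-\beta_i$ for $i\ne k$, $a_{ij}=0$ for $i\ne j\ne k\ne i$, and $a_{kk}=\sum_{i\ne k}\beta_i+\beta_m$, i.e.\ the conditions of \eqref{it:natredgmgk}. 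Conversely, those conditions let one put $\beta_i=a_{ii}$ $(i\ne k)$ and $\beta_m=a_{kk}-\sum_{i\ne k}a_{ii}>0$, and the substitution $y_i=x_i-x_k$ $(i\ne k)$, $y_k=x_k$ shows that such an $f$ is automatically positive definite. In case \eqref{it:natredQ}, $\p=\m$ is the $Q$-orthogonal complement to $\h$ for $Q=\sum_{i=1}^m\alpha_i\ip_i$, so $\pi$ sends $(x_1,\dots,x_{m-1},0)\otimes Z$ to $(x_1-c,\dots,x_{m-1}-c,-c)\otimes Z$ with $c=S^{-1}\sum_{i=1}^{m-1}\alpha_i x_i$ forced by $\sum_i\alpha_i(x_i-c)=0$ and $S=\sum_{i=1}^m\alpha_i\ne0$; pulling $Q$ back yields
\[
f(x_1,\dots,x_{m-1})=\sum_{i=1}^{m-1}\alpha_i x_i^2-\frac1S\Bigl(\sum_{i=1}^{m-1}\alpha_i x_i\Bigr)^2 ,
\]
that is $a_{ij}=\delta_{ij}\alpha_i-S^{-1}\alpha_i\alpha_j$, with the sign conditions on the $\alpha_i$ and on $S$ carried over verbatim from Theorem~\ref{th:natred}\eqref{it:natredQ}. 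Conversely such an $f$ reconstructs $Q$ and $\m$, and positive-definiteness was already shown in the proof of Theorem~\ref{th:natred} (by the law of inertia applied to $\tilde T\diag(\alpha_1,\dots,\alpha_m)\tilde T^t$) to be equivalent to those sign conditions.

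These computations are elementary, so I do not expect a serious obstacle; the points needing care are bookkeeping ones. First, in case \eqref{it:natredideal} one must identify the abstract ideal $\p$ with a specific $\g_k$ and separate $k=m$ from $k\le m-1$, since only the latter requires a change of coordinates. Second, the positive-definiteness of $f$ must be matched against the stated inequalities: in \eqref{it:natredgmgk} via the triangular substitution above, and in \eqref{it:natredgmQ} by reference to the inertia argument in the proof of Theorem~\ref{th:natred}. I would also record that the three families in the Corollary overlap (a given naturally reductive metric may be generated by several pairs, hence appear in more than one case), but this is harmless since the Corollary only asserts which quadratic forms $f$ occur.
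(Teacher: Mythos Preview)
Your approach is correct and is precisely the ``direct calculation from Theorem~\ref{th:natred}'' that the paper invokes: the paper's own proof merely states that cases \eqref{it:natredgmgm} and \eqref{it:natredgmgk} correspond to Theorem~\ref{th:natred}\eqref{it:natredideal} according as $\p=\g_m$ or $\p=\g_k$ with $k\le m-1$, and that case \eqref{it:natredgmQ} corresponds to Theorem~\ref{th:natred}\eqref{it:natredQ}, without writing out the projection $\pi$ or the resulting quadratic forms. Your explicit computation of $\pi$ and of $\pi^*(\cdot,\cdot)$ in each case, together with the check of positive-definiteness, is exactly what is being left to the reader.
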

\begin{proof}
The proof follows by a direct calculation from Theorem~\ref{th:natred}. The cases \eqref{it:natredgmgm} and \eqref{it:natredgmgk} correspond to the inner products in Theorem~\ref{th:natred}\eqref{it:natredideal} when the ideal $\p=(m-1)\f$ is equal or is not equal to $\g_m$ respectively. The case \eqref{it:natredgmQ} corresponds to the inner product in Theorem~\ref{th:natred}\eqref{it:natredQ}.
\end{proof}

Note that the conditions imposed on $\alpha_i$ in Corollary~\ref{cor:natredgm}\eqref{it:natredgmQ} and in Theorem~\ref{th:natred}\eqref{it:natredQ} are in essence equivalent to the fact that $(\cdot, \cdot)$ is positive definite.

\smallskip

For consistency, we include the proof of Proposition~\ref{ledob6.prop} (which is different from and is shorter than that in \cite{CN2017}).

\begin{proof}[Proof of Proposition~\ref{ledob6.prop}] Consider the quadratic form $f(x_1,x_2)=a_{11}x_1^2+2a_{12}x_1x_2+a_{22}x_2^2$ that defines the inner product $(\cdot,\cdot)$ on $\g_3 = \f\oplus \f\oplus 0$ by \eqref{eq:ineerp}.

First suppose that $a_{12}(a_{11}+a_{12})(a_{22}+a_{12})= 0$. Note that $a_{11}+a_{12}\geq 2\sqrt{a_{11}a_{12}}>2|a_{12}|$. Hence $a_{11}+a_{12}=0$ implies $a_{22}>a_{11}$ and $a_{22}+a_{12}=0$ implies $a_{22}<a_{11}$. Therefore, if $a_{12}=0$ or $a_{11}+a_{12}=0$ or $a_{22}+a_{12}=0$, then the metric $(\cdot,\cdot)$ is naturally reductive (and even normal homogeneous) by Corollary~\ref{cor:natredgm}\eqref{it:natredgmgm} and \eqref{it:natredgmgk}).

Now assume that $a_{12}(a_{11}+a_{12})(a_{22}+a_{12}) \neq 0$. By Corollary~\ref{cor:natredgm}\eqref{it:natredgmQ}, for the metric $(\cdot,\cdot)$ is naturally reductive if there exist $\alpha_1,\alpha_2, \alpha_3 \in \mathbb{R}$ such that
\begin{equation*}
a_{11}=\frac{\alpha_1(\alpha_2+\alpha_3)}{S}, \quad
a_{12}=\frac{-\alpha_1\alpha_2}{S}, \quad
a_{22}=\frac{\alpha_2(\alpha_1+\alpha_3)}{S}\, ,
\end{equation*}
where $S=\alpha_1+\alpha_2+\alpha_3$ and either $\alpha_1, \alpha_2, \alpha_3 > 0$, or exactly one of them is negative and $S < 0$. This system has the following solution: $(\alpha_1, \alpha_2,\alpha_3)=\left(\frac{D}{a_{22}+a_{12}}, -\frac{D}{a_{12}}, \frac{D}{a_{11}+a_{12}}\right)$, where $D=a_{11}a_{22}-a_{12}^2>0$, and we have $S=-\frac{D^2}{a_{12}(a_{11}+a_{12})(a_{22}+a_{12})}$. Now if $a_{12} > 0$, then $\alpha_1, \alpha_3 > 0 > \alpha_2, S$. If $a_{12} <0$, we can assume that $a_{11} \ge a_{22}$. Then by the argument above, $a_{11}+a_{12} > 0$ and then $\alpha_2, \alpha_3 > 0$ and $\alpha_1$ and $S$ have the same sign.
\end{proof}


\section{Geodesic orbit metrics on Ledger--Obata spaces}
\label{s:go-legob}

In this Section, it will be more convenient to choose and fix the $\ad (\h)$-invariant complement $\m$ to $\h=\diag(\f)$ in $\g=m\f$ as defined in \eqref{eq:defm}. Let $(\cdot,\cdot)$ be an $\ad (\h)$-invariant inner product on $\m$. We can decompose $\m$ into the direct sum of $\ad (\h)$-invariant, irreducible submodules $\p_1,\dots,\p_{m-1}$ which are orthogonal both relative to $(\cdot,\cdot)$ and to $\ip$. By Lemma~\ref{strinmod} we have
\begin{equation*}
\p_i=\{(b^i_1, b^i_2, \dots ,b^i_m) \otimes X\, | \, X\in \f\}, \quad i=1,\dots,m-1,
\end{equation*}
and so
\begin{equation}\label{basis_adapt}
\sum\limits_{k=1}^m b^i_k =0, \quad
\sum\limits_{k=1}^m (b^i_k)^2 =1, \quad
\sum\limits_{k=1}^m b^i_k b^j_k =0 \quad \mbox{for}\quad i\neq j,
\end{equation}
so that the vectors $b^i=(b^i_1,b^i_2,\dots,b^i_m), \; i=1,\dots,m-1$, constitute an orthonormal basis for $(1, \dots, 1)^\perp \subset \mathbb{R}^m$. We have \begin{equation}\label{decomploort}
(\cdot,\cdot)=\gamma_1 \ip|_{\p_1}+\gamma_2 \ip|_{\p_2}+ \dots +\gamma_{m-1} \ip|_{\p_{m-1}},
\end{equation}
for some positive numbers $\gamma_i$, and the metric endomorphism $A$ defined in Section~\ref{ss:gocondition} has the form
\begin{equation*}
A =\gamma_1 \Id|_{\p_1}+\gamma_2 \Id|_{\p_2}+\dots +\gamma_{m-1} \Id|_{\p_{m-1}}.
\end{equation*}

In a Euclidean space $\mathbb{R}^m$ with a fixed orthonormal basis, introduce the symmetric binary operation $\diamond$ by setting $a\diamond b=(a_1b_1, a_2b_2, \dots, a_m b_m)$, for $a=(a_1,a_2,\dots,a_m), b=(b_1,b_2,\dots,b_m) \in \br^m$ (as in Section~\ref{ss:metrics}). Then we have $[\p_i, \p_j]=\{(b^i \diamond b^j) \otimes X \, | \, X\in \f\}$. Suppose that $(\cdot,\cdot)$ is geodesic orbit. By Lemma~\ref{l:brackets}\eqref{it:alphabeta} we obtain
\begin{equation}\label{basis_sadapt}
b^i \diamond b^j \in \Span(b^i, b^j),
\end{equation}
for all $i, j$ such that $\gamma_i \neq \gamma_j$. For $\gamma \in \br$, let $V= \Span (b^i \, | \, \gamma_i=\gamma)$. Then by Lemma~\ref{l:brackets}\eqref{it:twoinone} we have
\begin{equation}\label{self}
a \diamond b \in V, \quad \text{for all orthogonal} \quad a, b \in V.
\end{equation}
We call a system of vectors $b^i=(b^i_1,b^i_2,\dots,b^i_m)\in \mathbb{R}^m, \; i=1,\dots,m-1$, \emph{adapted}, if it satisfies~\eqref{basis_adapt}. An adapted system is called \emph{super-adapted} if in addition it satisfies~\eqref{basis_sadapt}. A linear subspace $V \subset (1, 1, \dots, 1)^\perp \subset \mathbb{R}^m$ is called \emph{self-saturated} if it satisfies \eqref{self}.

From the above, every $\ad (\h)$-invariant inner product $(\cdot,\cdot)$ on $\m$ determines an adapted system of vectors $b^i=(b^i_1,b^i_2,\dots,b^i_m) \in \mathbb{R}^m, \; i=1,\dots,m-1$, via the decomposition \eqref{decomploort}. Moreover, if the eigenvalues $\gamma_i$ of $A$ are pairwise distinct, such a system is unique. If in addition $(\cdot,\cdot)$ is geodesic orbit, then this unique adapted system is super-adapted. We want to establish a similar correspondence in the case when the eigenvalues of $A$ are not necessarily simple.

\begin{lemma}\label{selfsatur2}
For any self-saturated linear subspace $V$ there is an orthonormal basis  $\{u_i\}$ such that $u_i \diamond u_j \in \mathbb{R} u_i \cup \mathbb{R} u_j$, for every $i \neq j$.
\end{lemma}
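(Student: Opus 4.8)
The plan is to induct on $d=\dim V$; the cases $d\le 1$ are vacuous, so assume $d\ge 2$ and that the statement holds for all self-saturated subspaces of dimension $<d$.

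The first and only delicate step is to choose the right first basis vector. I would take $u_1\in V$ to be a unit vector maximizing the cubic form $v\mapsto\sum_{i=1}^m v_i^3$ over the (compact) unit sphere of $V$; such a maximizer exists by compactness. Let $\pi\colon\br^m\to V$ be the orthogonal projection. Differentiating $t\mapsto\sum_i\bigl((\cos t)u_1+(\sin t)w\bigr)_i^3$ at $t=0$ for a unit vector $w\in V$ with $w\perp u_1$ shows $\langle u_1\diamond u_1,w\rangle=0$, hence, by linearity, $\langle u_1\diamond u_1,w\rangle=0$ for every $w\in V\cap u_1^\perp$, so that $\pi(u_1\diamond u_1)=\mu u_1$ for some $\mu\in\br$. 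The payoff of this choice is that the multiplication operator $M_{u_1}\colon x\mapsto u_1\diamond x$ on $\br^m$ — which is symmetric, being diagonal in the standard basis — preserves $V':=V\cap u_1^\perp$: indeed, for $w\in V'$ self-saturation gives $u_1\diamond w\in V$, while $\langle u_1\diamond w,u_1\rangle=\langle w,u_1\diamond u_1\rangle=\langle w,\pi(u_1\diamond u_1)\rangle=\mu\langle w,u_1\rangle=0$.

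Next I would decompose $V'=W_1\oplus\dots\oplus W_r$ into the pairwise orthogonal eigenspaces of $M_{u_1}|_{V'}$, with pairwise distinct eigenvalues $\nu_1,\dots,\nu_r$ (so $u_1\diamond w=\nu_k w$ for $w\in W_k$), and verify that each $W_k$ is again self-saturated: for orthogonal $a,b\in W_k$ one has $a\diamond b\in V$, whose component along $u_1$ equals $\langle a\diamond b,u_1\rangle=\langle b,M_{u_1}a\rangle=\nu_k\langle a,b\rangle=0$, so $a\diamond b\in V'$; and $M_{u_1}(a\diamond b)=(M_{u_1}a)\diamond b=\nu_k(a\diamond b)$ by associativity of $\diamond$, so $a\diamond b\in W_k$ (note also $W_k\subset V\subset(1,\dots,1)^\perp$). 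Since $\dim W_k\le d-1$, the induction hypothesis gives each $W_k$ an orthonormal basis of the required type, and the union of these bases together with $u_1$ is an orthonormal basis of $V$. For the final verification of $u_i\diamond u_j\in\br u_i\cup\br u_j$ when $i\ne j$: pairs inside a single $W_k$ are covered by the induction hypothesis; $u_1\diamond w=\nu_k w\in\br w$ for any basis vector $w$ of $W_k$; and for basis vectors $a\in W_k$, $b\in W_l$ with $k\ne l$, applying $M_{u_1}$ to $a\diamond b$ shows it equals both $\nu_k(a\diamond b)$ and $\nu_l(a\diamond b)$, whence $a\diamond b=0$ since $\nu_k\ne\nu_l$.

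I expect the main obstacle to be finding the right way to split off one basis vector at a time while preserving the inductive structure; the crucial point is that $u_1$ must be chosen so that $u_1\diamond u_1$ projects back onto its own line $\br u_1$ — equivalently, so that $u_1$ is a critical point of the cubic form $\sum_i v_i^3$ on the unit sphere of $V$ — because this is exactly what makes $M_{u_1}$ restrict to $V\cap u_1^\perp$ and renders its eigenspaces self-saturated. Once this is understood, the rest is routine manipulation with the associative, commutative product $\diamond$ and the spectral theorem for symmetric operators.
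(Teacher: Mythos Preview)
Your proof is correct, but takes a genuinely different route from the paper's.

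The paper chooses the first basis vector $v$ \emph{combinatorially}: a unit vector in $V$ with the maximal number of zero coordinates. With that choice one gets the stronger conclusion $u\diamond v\in\br v$ for every $u\in V':=v^\perp\cap V$ (otherwise a nontrivial linear combination of $v$ and $u\diamond v$ would have more zeros), so $V'$ itself is immediately self-saturated and the induction runs on all of $V'$ at once. Your choice of $u_1$ is \emph{variational}: a critical point of the cubic $\sum_i v_i^3$ on the unit sphere of $V$. This yields only the weaker fact that $M_{u_1}$ preserves $V'$, which forces the extra step of passing to the eigenspaces $W_k$ of $M_{u_1}|_{V'}$ and checking that each $W_k$ is self-saturated before applying induction; you then also need the vanishing argument $a\diamond b=0$ for basis vectors in distinct eigenspaces. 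Both arguments are short, but the paper's combinatorial pick buys a cleaner inductive step (no spectral decomposition, no cross-eigenspace check), while your approach has the pleasant feature that it packages the key property of $u_1$ as a single Lagrange-multiplier condition and makes the role of the symmetric operator $M_{u_1}$ explicit.
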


\begin{proof}
There is nothing to prove if $\dim V = 1$. Otherwise, choose a unit vector $v \in V$ which has the maximal number of zero coordinates and denote $V'=v^\perp \cap V$. For any $u \in V'$, the vector $u \diamond v \in V$ has at least as many zero coordinates as $v$ does; moreover, if $u \diamond v$ is not a multiple of $u$, then there is a non-zero linear combination of $v$ and $u \diamond v$ having more zero coordinates than $v$, in contradiction with the choice of $v$. It follows that $u \diamond v \in \mathbb{R} v$, for all $u \in V'$. Furthermore, let $u, w \in V'$ be orthogonal vectors. Then $u \diamond w \in V$ and $\<u \diamond w,v\> = \<u \diamond v,w\> = 0$, so that $u \diamond w \in V'$. It follows that $V'$ is again self-saturated and the claim follows by induction.
\end{proof}

This lemma easily implies the following.

\begin{proposition}\label{selfsatur3}
Every geodesic orbit inner product $(\cdot,\cdot)$ on $\m$ determines a super-adapted system of vectors $b^i=(b^i_1,b^i_2,\dots,b^i_m)\in \mathbb{R}^m, \; i=1,\dots,m-1$, via the decomposition \eqref{decomploort}.
\end{proposition}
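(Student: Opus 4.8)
The plan is to reduce Proposition~\ref{selfsatur3} to Lemma~\ref{selfsatur2} by choosing, inside each eigenspace of the metric endomorphism $A$, a basis of the special form provided by that lemma, and then checking that the union of these bases over all eigenvalues constitutes a super-adapted system. Concretely, let $\gamma^{(1)}, \dots, \gamma^{(r)}$ be the distinct eigenvalues of $A$ and let $\m_{\gamma^{(s)}}$ be the corresponding eigenspaces; since $(\cdot,\cdot)$ is geodesic orbit, equation~\eqref{self} holds, so each $V^{(s)} := \{ a \in (1,\dots,1)^\perp \mid a \otimes X \in \m_{\gamma^{(s)}} \text{ for } X \in \f\}$ is a self-saturated subspace of $(1,\dots,1)^\perp \subset \br^m$. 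The subspaces $V^{(s)}$ are mutually orthogonal and their direct sum is all of $(1,\dots,1)^\perp$, because the $\m_{\gamma^{(s)}}$ are $\ip$-orthogonal and span $\m$.

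First I would apply Lemma~\ref{selfsatur2} to each $V^{(s)}$ to obtain an orthonormal basis $\{u^{(s)}_t\}$ of $V^{(s)}$ with $u^{(s)}_t \diamond u^{(s)}_{t'} \in \br u^{(s)}_t \cup \br u^{(s)}_{t'}$ for $t \ne t'$. Concatenating these over $s$ yields an orthonormal basis $b^1, \dots, b^{m-1}$ of $(1,\dots,1)^\perp$, and this is precisely an adapted system in the sense of~\eqref{basis_adapt}. It also realises the decomposition~\eqref{decomploort}: each $\p_i := \{ b^i \otimes X \mid X \in \f\}$ is contained in the eigenspace $\m_{\gamma^{(s)}}$ to which $b^i$ belongs, so $(\cdot,\cdot)|_{\p_i} = \gamma^{(s)} \ip|_{\p_i}$, i.e.\ the $\gamma_i$ in~\eqref{decomploort} is the eigenvalue $\gamma^{(s)}$ with $b^i \in V^{(s)}$.

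It remains to verify the super-adapted condition~\eqref{basis_sadapt}: $b^i \diamond b^j \in \Span(b^i, b^j)$ for all $i,j$. If $b^i$ and $b^j$ lie in the same $V^{(s)}$, this is exactly the property guaranteed by Lemma~\ref{selfsatur2} (the $\diamond$-product even lands in $\br b^i \cup \br b^j$). If $b^i \in V^{(s)}$ and $b^j \in V^{(s')}$ with $s \ne s'$, then $\gamma_i \ne \gamma_j$, and since $(\cdot,\cdot)$ is geodesic orbit, Lemma~\ref{l:brackets}\eqref{it:alphabeta} applied to the eigenspaces $\m_{\gamma^{(s)}}, \m_{\gamma^{(s')}}$ gives $[\p_i,\p_j] \subset \p_i + \p_j$, which translates to $b^i \diamond b^j \in \Span(b^i, b^j)$ — this is the implication already recorded in~\eqref{basis_sadapt} of the text. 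Thus both cases are covered and the system is super-adapted.

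The only genuine subtlety, and the point I would be most careful about, is the bookkeeping that the basis produced by gluing together the per-eigenspace bases of Lemma~\ref{selfsatur2} is really \emph{the} adapted system attached to $(\cdot,\cdot)$ via~\eqref{decomploort}, rather than an unrelated one — i.e.\ that each chosen $u^{(s)}_t$ genuinely has $\gamma_t = \gamma^{(s)}$ and that the resulting $\p_i$ are mutually $(\cdot,\cdot)$- and $\ip$-orthogonal. This is immediate once one notes that $b^i \otimes X$ lies in the eigenspace $\m_{\gamma^{(s)}}$ precisely when $b^i \in V^{(s)}$, and that orthonormality of the $b^i$ within and across the $V^{(s)}$ forces the $\ip$-orthogonality of the $\p_i$; the $(\cdot,\cdot)$-orthogonality then follows since each $\p_i$ is an eigenspace-piece of $A$. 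There are no hard estimates or constructions beyond invoking Lemma~\ref{selfsatur2} and Lemma~\ref{l:brackets}, so the whole argument is short; the care is entirely in the indexing.
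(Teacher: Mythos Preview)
Your proposal is correct and follows essentially the same route as the paper's own proof: for each eigenvalue of $A$ the corresponding eigenspace gives a self-saturated subspace of $(1,\dots,1)^\perp$ by~\eqref{self}, Lemma~\ref{selfsatur2} supplies a special orthonormal basis in each, and the union is super-adapted because the within-eigenspace condition comes from Lemma~\ref{selfsatur2} while the cross-eigenspace condition is~\eqref{basis_sadapt} (derived from Lemma~\ref{l:brackets}\eqref{it:alphabeta}). Your write-up is simply a more explicit version of the paper's three-line argument, with the added (and welcome) verification that the concatenated basis really does realise the decomposition~\eqref{decomploort} with the correct $\gamma_i$.
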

\begin{proof}
For every eigenvalue of the metric endomorphism $A$, the corresponding eigenspace defines a self-saturated subspace in $\br^m$. In each of those subspaces, we choose an orthonormal basis as in Lemma~\ref{selfsatur2}. By Lemma~\ref{selfsatur2} and \eqref{basis_sadapt}, the union of these bases is a super-adapted system of vectors, as required.
\end{proof}

The following Theorem implies Theorem~\ref{th:gointro}.

\begin{theorem}\label{theoclass.1}
Let $F$ be a connected, compact, simple Lie group and let $g$ be an invariant metric on a Ledger--Obata space $F^m/\diag(F)$. The following statements are equivalent.

\begin{enumerate} [label=\emph{(\arabic*)},ref=\arabic*]
  \item \label{it:gothgo}
  The metric $g$ is geodesic orbit (so that the Ledger--Obata space is a geodesic orbit space).

  \item \label{it:gothnr}
  The metric $g$ is naturally reductive.

  \item \label{it:gothexplicit}
  The metric $g$ is defined by the inner product
      \begin{equation*}
        (\cdot,\cdot)=\gamma_1 \ip|_{\p_1}+\gamma_2 \ip|_{\p_2}+\dots +\gamma_{\m-1} \ip|_{\p_{m-1}},
    \end{equation*}
  where $\ip$ is minus the Killing form on $\g$, and $\p_i=\{(b^i_1,b^i_2,\dots,b^i_m)\otimes X\, | \, X\in \f\}$, $\gamma_i >0$, for $i=1,\dots,m-1$, are such that
  \begin{enumerate}[label=\emph{(\alph*)}, ref=\alph*]
    \item \label{it:gothexplicitsuper}
    the set $\{b^1, b^2, \dots, b^{m-1}\}\subset \mathbb{R}^m$ is a super-adapted system, and
    \item \label{it:gothexplicitCi}
    there exist $C_i \in \br, \; i=1,2,\dots,m-1$, such that
    \begin{equation*}
        \Big( 1 - \frac{\gamma_i}{\gamma_j} \Big) a_j^i =C_i, \quad \text{for } j \ne i,
    \end{equation*}
    where $b^i \diamond b^j =a^j_i b^i + a_j^i b^j$.
  \end{enumerate}
\end{enumerate}
\end{theorem}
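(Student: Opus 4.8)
The plan is to prove Theorem~\ref{theoclass.1} by establishing the cycle of implications $\eqref{it:gothnr} \Rightarrow \eqref{it:gothgo} \Rightarrow \eqref{it:gothexplicit} \Rightarrow \eqref{it:gothnr}$. The first implication is immediate from the general fact recalled in the introduction that any naturally reductive metric is geodesic orbit, so no work is needed there. The bulk of the proof is the implication $\eqref{it:gothgo} \Rightarrow \eqref{it:gothexplicit}$, which extracts the explicit combinatorial description, and then the closing implication $\eqref{it:gothexplicit} \Rightarrow \eqref{it:gothnr}$, which matches the data $(b^i, \gamma_i)$ against the list in Theorem~\ref{th:natred} (equivalently Corollary~\ref{cor:natredgm}).

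For $\eqref{it:gothgo} \Rightarrow \eqref{it:gothexplicit}$, I would start from the setup already fixed in this section: write $(\cdot,\cdot)$ on $\m$ as in \eqref{decomploort}, and invoke Proposition~\ref{selfsatur3} to obtain that the associated system $\{b^i\}$ can be chosen super-adapted, which is part~\eqref{it:gothexplicitsuper}. For part~\eqref{it:gothexplicitCi}, I would apply the geodesic orbit criterion, Lemma~\ref{GO-criterion}, or more conveniently Lemma~\ref{l:brackets}\eqref{it:alphabeta}, to a generic vector $X \in \m$ written in the super-adapted basis. Using $[\p_i,\p_j] = \{(b^i \diamond b^j)\otimes X\,|\,X\in\f\}$ and the relation $b^i\diamond b^j = a^j_i b^i + a^i_j b^j$ (valid because of super-adaptedness when $\gamma_i \ne \gamma_j$; when $\gamma_i = \gamma_j$ the relevant coefficients drop out of the condition), the GO-equation $[X+Z,AX]\in\h$ for $X=\sum_i x_i\, b^i\otimes e$ (with $e\in\f$ a fixed element and the $x_i$ free parameters) becomes, after projecting onto $\m$ and collecting the coefficient of each $b^i$, a system of polynomial identities in the $x_i$. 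The existence of a suitable $Z\in\h$ — which acts on each $\p_i$ by a scalar, essentially contributing the constants $C_i$ — forces precisely the relations $(1-\gamma_i/\gamma_j)a^i_j = C_i$ for $j\ne i$. I expect the main obstacle to be the careful bookkeeping here: one must handle repeated eigenvalues (where $\p_i,\p_j$ with $\gamma_i=\gamma_j$ interact) and make sure the quadratic terms in the $x_i$ cancel correctly, so that the linear-in-$Z$ part can absorb the remainder; this is where the self-saturation property \eqref{self} and the structure of the $\diamond$-operation are genuinely used, not just super-adaptedness.

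For $\eqref{it:gothexplicit} \Rightarrow \eqref{it:gothnr}$, the strategy is to show that the combinatorial constraints in \eqref{it:gothexplicitsuper}–\eqref{it:gothexplicitCi} are exactly solved by the two families of Theorem~\ref{th:natred}. I would analyse the $\diamond$-structure of a super-adapted system directly, in the spirit of the proof of Lemma~\ref{l:subal} and Lemma~\ref{selfsatur2}: the condition $b^i\diamond b^j\in\Span(b^i,b^j)$ for all $i,j$ is very rigid, and one can show that a super-adapted basis of $(1,\dots,1)^\perp\subset\br^m$ is, up to orthogonal change within eigenspaces, either (i) built from a partition of $\{1,\dots,m\}$ — giving the "ideal" metrics of Theorem~\ref{th:natred}\eqref{it:natredideal} and a reducible splitting — or (ii) the orthonormal basis diagonalising a quadratic form $Q=\sum\alpha_i\langle\cdot,\cdot\rangle_i$, giving case \eqref{it:natredQ}. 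Feeding this classification, together with the scalar constraints \eqref{it:gothexplicitCi} on the $\gamma_i$, back into \eqref{eq:Kostant} then realises $(\cdot,\cdot)$ as the restriction of an $\ad$-invariant $Q$ to the complement it determines, i.e.\ proves natural reductivity; alternatively one verifies the eigenvalue relations coincide with those forced by Corollary~\ref{cor:natredgm}. The delicate point in this direction is proving the dichotomy for super-adapted systems cleanly — showing that the only obstructions to the "$Q$-diagonalising" picture are the block-diagonal ones — and then checking that the constants $C_i$ are consistent precisely when $(\cdot,\cdot)$ has the naturally reductive form and not merely geodesic orbit in some weaker sense; but since we already know $\eqref{it:gothgo}\Leftrightarrow\eqref{it:gothexplicit}$, it suffices to exhibit for each such system an $\ad(\g)$- or $\ad(\p)$-invariant $Q$ via \eqref{eq:Kostant}, which is a finite, if somewhat intricate, verification.
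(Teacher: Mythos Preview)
Your overall cycle $\eqref{it:gothnr}\Rightarrow\eqref{it:gothgo}\Rightarrow\eqref{it:gothexplicit}\Rightarrow\eqref{it:gothnr}$ is reasonable, and your use of Proposition~\ref{selfsatur3} for part~\eqref{it:gothexplicitsuper} is exactly right. However, both of the nontrivial steps contain problems.

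\textbf{The gap in $\eqref{it:gothgo}\Rightarrow\eqref{it:gothexplicit}$\eqref{it:gothexplicitCi}.} Your test vector $X=\sum_i x_i\, b^i\otimes e$ with a \emph{single} fixed $e\in\f$ carries no information: then $X=a\otimes e$ and $AX=a'\otimes e$ for some $a,a'\in\br^m$, so $[X,AX]=(a\diamond a')\otimes [e,e]=0$, and the GO condition is vacuous for such $X$. Also, $Z\in\h$ does \emph{not} act on $\p_i$ by a scalar; for $Z=(Z_0,\dots,Z_0)$ one has $[Z,b^i\otimes Z_i]=b^i\otimes [Z_0,Z_i]$. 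The paper's device is to take $X=\sum_k b^k\otimes Z_k$ with \emph{different} $Z_k\in\f$, obtaining the system
\[
\sum_{j}(\gamma_i-\gamma_j)a^j_i[Z_j,Z_i]+\gamma_i[Z_0,Z_i]=0,\qquad i=1,\dots,m-1,
\]
and then to specialise to three indices $i,j,k$ with $Z_i=E_1,\;Z_j=E_2,\;Z_k=E_3$ spanning an $\su(2)\subset\f$. The resulting linear equations in $E_1,E_2,E_3$ force $(1-\gamma_i/\gamma_j)a^i_j=(1-\gamma_i/\gamma_k)a^i_k$, which is precisely the existence of the constants $C_i$. This $\su(2)$ trick is the missing idea in your argument.

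\textbf{The detour in $\eqref{it:gothexplicit}\Rightarrow\eqref{it:gothnr}$.} Classifying all super-adapted systems and matching them against Theorem~\ref{th:natred} is feasible in principle but is much more work than necessary, and you yourself flag the dichotomy as delicate. The paper avoids this entirely: it first proves $\eqref{it:gothexplicit}\Rightarrow\eqref{it:gothgo}$ by simply setting $Z_0=-\sum_j C_j Z_j$ and checking that the system above is satisfied, so $[X+Z,AX]=0$. The key observation is that this assignment $X\mapsto Z$ is \emph{linear} and $\Ad(H)$-equivariant; by \cite[Proposition~2.10]{KV}, a GO metric for which such a linear choice of $Z$ exists is automatically naturally reductive. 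Thus $\eqref{it:gothgo}\Rightarrow\eqref{it:gothnr}$ follows immediately once the explicit $Z$ is in hand, with no need to analyse the structure of super-adapted systems at all.
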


\begin{proof} \eqref{it:gothgo} $\Rightarrow$ \eqref{it:gothexplicit}. Assertion \eqref{it:gothexplicit}\eqref{it:gothexplicitsuper} follows from Proposition~\ref{selfsatur3}. For assertion \eqref{it:gothexplicit}\eqref{it:gothexplicitCi}, we can assume that $m \ge 4$. Consider the metric endomorphism
\begin{equation*}
A =\gamma_1 \Id|_{\p_1}+\gamma_2 \Id|_{\p_2}+\dots +\gamma_{m-1} \Id|_{\p_{m-1}}\,
\end{equation*}
for $(\cdot,\cdot)$. By Lemma~\ref{GO-criterion}, $(\cdot,\cdot)$ is geodesic orbit if and only if for any $X \in \m$ there exists $Z\in \h$ such that
$[X+Z, A\,X]=0$. Take an arbitrary $X=X_1+X_2+\dots+X_{m-1} \in \m$, where $X_k \in\p_k$, and $X_k=(b^k_1 ,b^k_2,\dots,b^k_m) \otimes Z_k$ for some $Z_k \in \f$. Then $AX = \sum_{k=1}^{m-1} \gamma_k X_k$, and every $Z\in \h$ has the form $Z=(Z_0,Z_0,\dots,Z_0)$, where $Z_0 \in \f$.

We have
\begin{align*}
    [X, A\,X] &=\sum_{i,j} [X_i, \gamma_j X_j]=\sum_{i,j} \gamma_j (b^i \diamond b^j) [Z_i,Z_j]= \sum_{i,j} \gamma_j  (a^j_i  b^i + a_j^i  b^j)  [Z_i,Z_j] \\
    &=\sum_{i,j} \gamma_j  a^j_i  b^i   [Z_i,Z_j]+\sum_{i,j} \gamma_i   a_i^j  b^i  [Z_j,Z_i]= \sum_{i,j} (\gamma_j-\gamma_i)  a^j_i  b^i   [Z_i,Z_j]\,,\\
    [Z, A\,X]&=\sum_{i}\gamma_i b^i  [Z_0,Z_i].
\end{align*}
Since the vectors $b^1, \dots, b^{m-1}$ are linear independent, the equation $[X+Z, A\,X]=0$ is equivalent to the following system of equation.
\begin{equation}\label{gocondcoord}
\sum_{j=1}^{m-1} (\gamma_i-\gamma_j) a^j_i [Z_j,Z_i]+\gamma_i [Z_0,Z_i]=0,\quad i=1,\dots,m-1\,.
\end{equation}

Consider a subalgebra $\su(2)\subset \f$ spanned by vectors $E_1, E_2, E_3$ such that $[E_1,E_2]=E_3$, $[E_2,E_3]=E_1$, and $[E_3,E_1]=E_2$.
Fix pairwise distinct $i, j, k = 1, 2, \dots, m-1$, and take $Z_i=E_1, \; Z_j=E_2, \; Z_k=E_3$, and $Z_l=0$ for $l \not \in \{0,i,j,k\}$ in \eqref{gocondcoord}. We get
\begin{align*}
-(\gamma_i-\gamma_j)  a^j_i   E_3+(\gamma_i-\gamma_k)  a^k_i   E_2+\gamma_i  [Z_0,E_1]&=0,\\
-(\gamma_j-\gamma_k)  a^k_j   E_1+(\gamma_j-\gamma_i)  a^i_j   E_3+\gamma_j  [Z_0,E_2]&=0,\\
-(\gamma_k-\gamma_i)  a^i_k   E_2+(\gamma_k-\gamma_j)  a^j_k   E_1+\gamma_k  [Z_0,E_3]&=0.
\end{align*}
Then $Z_0$ is in the normaliser of $\su(2)$ in $\f$, and hence $Z_0=d_1 E_1+d_2 E_2+d_3 E_3+Z_0'$, where $Z_0'$
is in the centraliser of $\su(2)$ in $\f$. It follows that
\begin{equation*}
(\gamma_j-\gamma_i)  a^i_j=-\gamma_j  d_1\quad\mbox{ and }\quad (\gamma_k-\gamma_i)  a^i_k  =- \gamma_k   d_1\,,
\end{equation*}
which implies $(1 - \frac{\gamma_i}{\gamma_j}) a^i_j = (1 - \frac{\gamma_i}{\gamma_k} ) a^i_k$. Since $i,j,k$ are arbitrary, we get \eqref{it:gothexplicitCi}.

\smallskip

\eqref{it:gothexplicit} $\Rightarrow$ \eqref{it:gothgo}. Let $X \in \m$ be arbitrary. We have $X=X_1+X_2+\dots+X_{m-1}$, where $X_k=(b^k_1, b^k_2, \dots, b^k_m) Z_k \in \p_k$ and $Z_k \in \f$. Take $Z_0=-\sum_{j=1}^{m-1} C_j Z_j \in \f$. By \eqref{it:gothexplicitCi} we have $\sum_{j=1}^{m-1} C_j Z_j=\sum_{j=1}^{m-1} (1 - \frac{\gamma_j}{\gamma_i}) a^j_i Z_j$ for any $i=1,\dots,m-1$, which implies \eqref{gocondcoord}. Hence we obtain $[X+Z,A\,X]=0$ for $Z=(1,1,\dots, 1) \otimes Z_0$ and so the space is geodesic orbit by Lemma~\ref{GO-criterion}.

\smallskip

\eqref{it:gothnr} $\Leftrightarrow$ \eqref{it:gothgo}. The implication \eqref{it:gothnr} $\Rightarrow$ \eqref{it:gothgo} is always true. For the converse, note that the correspondence $X \mapsto Z$ such that $[X+Z,A\,X]=0$ can be chosen linear (and $\Ad(H)$-equivariant), as constructed above. Then the metric is naturally reductive by \cite[Proposition~2.10]{KV}.
\end{proof}

By Theorem~\ref{theoclass.1}, any geodesic orbit metric on a Ledger--Obata space is naturally reductive and as such, can be described using Theorem~\ref{th:natred}. An explicit construction taking $\g_m$ for an $\Ad(H)$-invariant complement to $\h$ is then given in Corollary~\ref{cor:natredgm}. We will continue working with the choice of $\m$ for an $\Ad(H)$-invariant complement to $\h$ and will clarify the dependence of the structure of the submodules $\p_i$ and the corresponding super-adapted system $\{b^i\}$ in Theorem~\ref{theoclass.1}\eqref{it:gothexplicit} of the eigenvalue structure of the metric endomorphism $A$.

Let $\m=\sum_{\alpha}\m_{\alpha}$ be the decomposition into the sum of the eigenspaces of the endomorphism $A$. Note that an eigenvalue $\alpha$ is simple if and only if $\dim (\m_{\alpha})=\dim (\f)$.

\begin{corollary}\label{corclass.1} If the metric $(\cdot,\cdot)$ is geodesic orbit and $C_i=0$ for some $i=1,2,\dots,m-1$, then
$[\p_i,\p_j]\subset \p_i$ for every $j \neq i$ such that $\gamma_i \neq \gamma_j$.
\end{corollary}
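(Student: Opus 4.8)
The plan is to translate the desired inclusion $[\p_i,\p_j]\subset \p_i$ into a single scalar condition on the coefficients appearing in Theorem~\ref{theoclass.1}\eqref{it:gothexplicitCi}, and then read that condition off directly from the hypothesis $C_i=0$. First I would record the following bookkeeping observation: since $(\cdot,\cdot)$ is geodesic orbit, Proposition~\ref{selfsatur3} (equivalently, the super-adaptedness \eqref{basis_sadapt} of the associated system $\{b^1,\dots,b^{m-1}\}$) guarantees that for every $j\ne i$ with $\gamma_i\ne\gamma_j$ we have $b^i\diamond b^j\in\Span(b^i,b^j)$, so the coefficients in $b^i\diamond b^j=a^j_i b^i+a^i_j b^j$ are well defined; and since $[\p_i,\p_j]=\{(b^i\diamond b^j)\otimes X\mid X\in\f\}$, the inclusion $[\p_i,\p_j]\subset\p_i$ holds precisely when the $b^j$-coefficient $a^i_j$ vanishes.

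Next I would invoke Theorem~\ref{theoclass.1}\eqref{it:gothexplicitCi} for the super-adapted system and constants $C_i$ attached to the given geodesic orbit metric: one has $\bigl(1-\tfrac{\gamma_i}{\gamma_j}\bigr)a^i_j=C_i$ for all $j\ne i$. Under the hypothesis $C_i=0$, and restricting to indices $j$ with $\gamma_i\ne\gamma_j$ so that the scalar $1-\tfrac{\gamma_i}{\gamma_j}$ is nonzero, this forces $a^i_j=0$. Hence $b^i\diamond b^j=a^j_i b^i$ is a scalar multiple of $b^i$, and therefore $[\p_i,\p_j]=\{b^i\otimes(a^j_i X)\mid X\in\f\}\subset\p_i$, which is the assertion of the Corollary.

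I do not expect a genuine obstacle here: the statement is a direct unwinding of Theorem~\ref{theoclass.1}\eqref{it:gothexplicitCi}, and the whole argument fits in a few lines. The only points requiring a little care are purely organisational: keeping straight which of the two coefficients $a^j_i$, $a^i_j$ is the one multiplying $b^j$ (hence the one that must vanish for $[\p_i,\p_j]\subset\p_i$), and noting that the standing assumption $\gamma_i\ne\gamma_j$ is used twice — once to ensure $b^i\diamond b^j$ lies in $\Span(b^i,b^j)$ at all, and once to legitimately divide the relation $\bigl(1-\tfrac{\gamma_i}{\gamma_j}\bigr)a^i_j=0$ by the nonzero factor $1-\tfrac{\gamma_i}{\gamma_j}$.
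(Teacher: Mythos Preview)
Your proof is correct and follows exactly the same route as the paper: one reads off $(1-\gamma_i/\gamma_j)\,a_j^i=C_i=0$ from Theorem~\ref{theoclass.1}\eqref{it:gothexplicit}\eqref{it:gothexplicitCi}, divides by the nonzero factor $1-\gamma_i/\gamma_j$ to get $a_j^i=0$, and concludes $b^i\diamond b^j=a^j_i b^i\in\mathbb{R}b^i$, hence $[\p_i,\p_j]\subset\p_i$. The extra bookkeeping you include (well-definedness of the coefficients via super-adaptedness) is just making explicit what the paper leaves implicit.
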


\begin{proof} For any $j \neq i$ we have $0=C_i=(1 - \frac{\gamma_i}{\gamma_j}) a_j^i$, and so $\gamma_i\neq \gamma_j$ implies
$a_j^i=0$, that is, $(b^i \diamond b^j) \f =(a^j_i b^i + a_j^i b^j) \f=a^j_i b^i \f \subset \p_i$.
\end{proof}

Note that the condition in Corollary \ref{corclass.1} is very restrictive if $\gamma_i$ is a simple eigenvalue of~$A$: the vector $b^i$ has exactly two non-zero coordinates. Indeed, assume that the first $r$ coordinates of $b^i$ are non-zero and the remaining $m-r$ are zeros (note that $r \ge 2$ by~\eqref{basis_adapt}). The fact that $b^i \diamond b^j$ is a multiple of $b^i$, for all $j \ne i$, implies that the first $r$ coordinates of each of $b^j$ are the same. But if $r \ge 3$, this implies that the vectors $b^j, \; j \ne i$, are linear dependent.

\begin{corollary}\label{corclass.2} If the metric $(\cdot,\cdot)$ is geodesic orbit and $\alpha$ is not a simple eigenvalue of $A$, then $[\m_{\alpha},\m_{\beta}]\subset \m_{\alpha}$ for every $\beta \neq \alpha$. In particular, if both $\alpha$ and $\beta$ are non-simple eigenvalues, then $[\m_{\alpha},\m_{\beta}]=0$.
\end{corollary}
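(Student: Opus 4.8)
The plan is to deduce Corollary~\ref{corclass.2} from Corollary~\ref{corclass.1} together with the structural description of super-adapted systems and the $C_i$-condition in Theorem~\ref{theoclass.1}. First I would fix a non-simple eigenvalue $\alpha$ of $A$ and consider the self-saturated subspace $V=\Span(b^i \mid \gamma_i=\alpha)$; by Proposition~\ref{selfsatur3} we may assume the $b^i$ with $\gamma_i=\alpha$ have been chosen, via Lemma~\ref{selfsatur2}, so that $u_i \diamond u_j \in \br u_i \cup \br u_j$ for any two of them. The key observation is that each such $b^i$ must then have exactly two non-zero coordinates: the argument is the one spelled out after Corollary~\ref{corclass.1}. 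Indeed, if $b^i$ had $r \ge 3$ non-zero coordinates, then since $\dim V \ge 2$ there is another basis vector $b^j \in V$, and $b^i \diamond b^j$ being a multiple of $b^i$ or of $b^j$ forces (in either case) the first $r$ coordinates of all the $b^j$ with $\gamma_j = \alpha$ to be proportional to those of $b^i$ on that support, making the family linearly dependent --- contradiction. So every $b^i$ with $\gamma_i = \alpha$ is, up to scaling, of the form $e_p - e_q$ (normalised), and, being mutually orthogonal inside $(1,\dots,1)^\perp$, their supports form a union of pairs; the relevant point is simply that $b^i \diamond b^i = b^i$ after rescaling, i.e. the only non-zero coordinates of $b^i$ are $\pm$ equal in absolute value.

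Next I would show $C_i = 0$ whenever $\gamma_i = \alpha$ is non-simple. For this pick $j \ne i$ with $\gamma_j = \gamma_i = \alpha$ (possible since $\alpha$ is non-simple, so $\dim V \ge 2$). The defining relation of Theorem~\ref{theoclass.1}\eqref{it:gothexplicitCi} reads $(1 - \gamma_i/\gamma_j) a^i_j = C_i$, and with $\gamma_i = \gamma_j$ the left side vanishes, hence $C_i = 0$. Now Corollary~\ref{corclass.1} applies: $C_i = 0$ gives $[\p_i, \p_j] \subset \p_i$ for every $j \ne i$ with $\gamma_i \ne \gamma_j$. Summing over all $i$ with $\gamma_i = \alpha$ and all $j$ with $\gamma_j = \beta \ne \alpha$ yields $[\m_\alpha, \m_\beta] \subset \m_\alpha$, using that $\m_\alpha = \bigoplus_{\gamma_i = \alpha}\p_i$ and $\m_\beta = \bigoplus_{\gamma_j = \beta}\p_j$ and that $[\p_i,\p_j]=\{(b^i \diamond b^j)\otimes X \mid X \in \f\}$ is bilinear in the obvious sense.

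Finally, for the "in particular" clause, suppose both $\alpha$ and $\beta$ are non-simple with $\alpha \ne \beta$. By what was just proved, $[\m_\alpha,\m_\beta] \subset \m_\alpha$; interchanging the roles of $\alpha$ and $\beta$ (which is legitimate since $\beta$ is also non-simple) gives $[\m_\beta,\m_\alpha] \subset \m_\beta$. Since $[\m_\alpha,\m_\beta] = [\m_\beta,\m_\alpha]$, this bracket lies in $\m_\alpha \cap \m_\beta = 0$, so $[\m_\alpha,\m_\beta]=0$.

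I expect the only genuinely delicate point to be the "exactly two non-zero coordinates" claim, i.e.\ making rigorous the linear-dependence argument sketched after Corollary~\ref{corclass.1} in the setting where several basis vectors share the eigenvalue $\alpha$. One must be careful that the conclusion $u_i \diamond u_j \in \br u_i \cup \br u_j$ from Lemma~\ref{selfsatur2} is a dichotomy that can fall on either side for different pairs, but in both cases one extracts the same proportionality constraint on the overlap of supports; once that is set up, counting dimensions against the $m-1$ available independent vectors closes the argument. Everything else is a direct bookkeeping consequence of Corollary~\ref{corclass.1} and the $C_i$-relations.
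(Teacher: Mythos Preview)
Your second and third paragraphs are correct and are exactly the paper's argument: since $\alpha$ is non-simple there exist $i\ne j$ with $\gamma_i=\gamma_j=\alpha$, whence the relation $(1-\gamma_i/\gamma_j)a^i_j=C_i$ gives $C_i=0$; Corollary~\ref{corclass.1} then yields $[\p_i,\p_k]\subset\p_i$ for every $k$ with $\gamma_k\ne\alpha$, and summing gives $[\m_\alpha,\m_\beta]\subset\m_\alpha$. The symmetric argument for the ``in particular'' clause is fine.

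Your first paragraph, however, should be deleted: it is not used anywhere in paragraphs~2--3, and its central claim is false. For a non-simple eigenvalue $\alpha$, the vectors $b^i$ with $\gamma_i=\alpha$ need \emph{not} have exactly two non-zero coordinates. The paper's own remark after Corollary~\ref{corclass.2} gives the correct bound: the number of non-zero coordinates of such a $b^i$ is at most the multiplicity of $\alpha$ plus one. Your attempted argument conflates two different situations: the discussion after Corollary~\ref{corclass.1} concerns a \emph{simple} eigenvalue with $C_i=0$, where $b^i\diamond b^j\in\br b^i$ holds for \emph{all} $j\ne i$ (there are $m-2$ of them), whereas in your setting you only know $b^i\diamond b^j\in\br b^i\cup\br b^j$ for the handful of $j$ with $\gamma_j=\alpha$, and the dichotomy landing on $\br b^j$ gives no constraint on $b^j$'s coordinates over the support of $b^i$. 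The subsidiary claim that $b^i\diamond b^i$ is a multiple of $b^i$ is likewise unfounded. None of this matters for the corollary; simply drop the paragraph.
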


\begin{proof}
Let $\p_i, \p_j \subset \m_{\alpha}, \; i \ne j$. Since $\gamma_i=\gamma_j$, we have $C_i=C_j=0$ by Theorem~\ref{theoclass.1}\eqref{it:gothexplicit}\eqref{it:gothexplicitCi}. Hence for any $\p_i\subset \m_{\alpha}$ and any $\p_k \not\subset \m_{\alpha}$, we get $0=C_i=( 1 - \frac{\gamma_i}{\gamma_k}) a_k^i$, and $\gamma_i\neq \gamma_k$ implies $a_k^i=0$, that is, $(b^i \diamond b^k) \f =(a^k_i b^i + a_k^i b^k) \f = a^k_i b^i \f \subset \p_i\subset \m_{\alpha}$.

It follows that $[\m_{\alpha},\m_{\beta}]\subset \m_{\alpha}$ for any $\beta \neq \alpha$.
\end{proof}

Note, that the number of non-zero coordinates of any vector $b^i$ with $\mathfrak{p_i}\subset \m_{\alpha}$ is at most the multiplicity of $\alpha$ plus $1$. This follows from the argument similar to the above: if the first $r$ coordinates of $b^i$ are non-zero and the rest are zeros, then the first $r$ coordinates of every vector $b^j$ with $\gamma_j \ne \alpha$, are the same; as these vectors are linear independent, there are no more than $m-r+1$ of them.

From this we easily get the following.

\begin{corollary}\label{corclass.3} If the metric $(\cdot,\cdot)$ is geodesic orbit and the corresponding super-adapted system $\{b^1,\dots, b^{m-1}\}$ is such that $b^i_k \ne 0$, for all $i, k$, then all the eigenvalues of $A$ are simple and $C_i\neq 0$ for all $i$.
\end{corollary}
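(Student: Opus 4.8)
The plan is to show that each way the conclusion can fail---a repeated eigenvalue of $A$, or $C_i=0$ for some $i$---would force one of the vectors $b^j$ to be a scalar multiple of $(1,1,\dots,1)$, which is impossible since $\sum_k b^j_k=0$ and $\sum_k(b^j_k)^2=1$ by \eqref{basis_adapt}. The single elementary fact doing all the work is: \emph{for every $i\neq j$ one has $b^i\diamond b^j\notin\mathbb{R} b^i\cup\mathbb{R} b^j$.} Indeed, if $b^i\diamond b^j=c\,b^i$ then $b^i_kb^j_k=c\,b^i_k$ for all $k$; dividing by $b^i_k\neq0$---this is exactly where the hypothesis $b^i_k\neq0$ is used---gives $b^j=(c,\dots,c)$, whence $c=0$ and $b^j=0$ by \eqref{basis_adapt}, a contradiction; the case $b^i\diamond b^j=c\,b^j$ is symmetric.

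First I would prove all eigenvalues of $A$ are simple. If some eigenvalue is not simple, then $\gamma_i=\gamma_j$ for some $i\neq j$, so $b^i$ and $b^j$ lie in a common eigenspace of $A$, which is a self-saturated subspace of $\mathbb{R}^m$; by Proposition~\ref{selfsatur3}, the corresponding super-adapted system restricts on this eigenspace to an orthonormal basis of the type furnished by Lemma~\ref{selfsatur2}, so $b^i\diamond b^j\in\mathbb{R} b^i\cup\mathbb{R} b^j$, contradicting the fact above. Hence the $\gamma_i$ are pairwise distinct. Now suppose $C_i=0$ for some $i$. Every $\gamma_j$ with $j\neq i$ then differs from $\gamma_i$, so Corollary~\ref{corclass.1} gives $[\p_i,\p_j]\subset\p_i$ for all $j\neq i$, i.e.\ $b^i\diamond b^j\in\mathbb{R} b^i$, again contradicting the fact above. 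Therefore $C_i\neq0$ for all $i$, and the corollary follows.

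I expect the only delicate point to be the first step. The definition of a super-adapted system alone constrains $b^i\diamond b^j$ only when $\gamma_i\neq\gamma_j$, and for the bi-invariant metric \emph{any} orthonormal basis of $(1,\dots,1)^\perp$ is super-adapted and can be taken with all coordinates nonzero; so one genuinely needs to use that ``the corresponding super-adapted system'' is the one manufactured in Proposition~\ref{selfsatur3}, which carries the $\diamond$-structure of Lemma~\ref{selfsatur2} inside each eigenspace. (For non-scalar $A$ one could alternatively invoke Corollary~\ref{corclass.2} together with the bound on the number of nonzero coordinates of $b^i$ noted just after it, but the scalar case would still need Lemma~\ref{selfsatur2}; routing through Proposition~\ref{selfsatur3} handles both at once.) Granting this, the rest is immediate from Corollaries~\ref{corclass.1} and~\ref{corclass.2}; the case $m=2$ is trivial and may be set aside.
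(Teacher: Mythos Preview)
Your proof is correct and rests on the same elementary observation the paper uses: if every entry of $b^i$ is nonzero, then $b^i\diamond b^j\in\mathbb{R}b^i$ forces $b^j$ to be a constant vector, hence zero. For the claim $C_i\neq 0$ you invoke Corollary~\ref{corclass.1} exactly as the paper does via the remark following it. For simplicity of the eigenvalues the paper instead appeals to the remark after Corollary~\ref{corclass.2}, which bounds the number of nonzero entries of $b^i$ by $(\text{multiplicity of }\gamma_i)+1$; when all entries are nonzero this forces the multiplicity to be $m-1$, i.e.\ either the eigenvalue is simple or $A$ is scalar. Your route is slightly different and tidier: you argue \emph{inside} a repeated eigenspace via the Lemma~\ref{selfsatur2} structure carried by the Proposition~\ref{selfsatur3} construction, obtaining $b^i\diamond b^j\in\mathbb{R}b^i\cup\mathbb{R}b^j$ directly and hence covering the scalar case in the same breath. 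You are right that this is the only delicate point and that it hinges on reading ``the corresponding super-adapted system'' as the one produced in Proposition~\ref{selfsatur3}; the paper's counting argument alone does not exclude the bi-invariant metric, but the construction there does, since Lemma~\ref{selfsatur2} always begins with a vector having the maximal number of zero entries.
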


We give an example of a super-adapted system and the corresponding geodesic orbit inner product, as in the statement of Corollary~\ref{corclass.3}.
\begin{example}\label{exasas.1}
Take arbitrary positive real numbers $z_1 < z_2 < \dots < z_m$ and consider the function $\varphi(t):= \sum_{k=1}^m \frac{z_k}{z_k-t}$. The equation $\varphi(t)=0$ has exactly $m-1$ distinct roots and all these roots are positive (there is exactly one root $t_i$ on the interval $(z_i, z_{i+1})$, for $i=1, \dots, m-1$). For $i=1, \dots, m-1, \; k=1, \dots, m$, define $b^i_k= \mu_i\frac{z_k}{z_k-t_i}$, where $\mu_i > 0$ are chosen in such a way that $\sum_{k=1}^m (b^i_k)^2=1$. Then $\sum_{k=1}^m b^i_k=0$, and for $i \ne j$ we have $b^i \diamond b^j =\frac{\mu_j t_i}{t_i-t_j} b^i+ \frac{\mu_i t_j}{t_j-t_i} b^j$ which implies $\sum_{k=1}^m b^i_k b^j_k=0$. Therefore the system $\{b^i\}$ is super-adapted.

Furthermore, by Theorem~\ref{theoclass.1}\eqref{it:gothexplicit}\eqref{it:gothexplicitCi} we get $C_i = (1 - \frac{\gamma_i}{\gamma_j}) \frac{\mu_i t_{j}}{t_{j}-t_{i}}$ for $i \ne j$. Then $C_i \neq 0$ and so $\gamma_i\neq \gamma_j$. Denote $\rho_i=\frac{C_i}{\mu_i \gamma_i t_i} \ne 0$. Then $\rho_i (t_i^{-1}-t_j^{-1}) = \gamma_i^{-1}-\gamma_j^{-1}$, for all $i \ne j$, and so $\rho_i=\rho_j$. Then $\rho_i = \rho \ne 0$ and $\gamma_i^{-1}= \rho t_i^{-1} + \lambda$ for some $\lambda \in \br$. Therefore, $\gamma_i=\frac{t_i}{\rho + \lambda t_i}$ for all $i=1,\dots,m-1$.
Hence we get a two-parameter family of geodesic orbit metrics for any such super-adapted system.
\end{example}

\section{Geodesic orbit Ledger--Obata manifolds} 
\label{s:LOm}

In this section, we classify the Ledger--Obata spaces which are geodesic orbit manifolds: any geodesic of such space is the orbit
of a one-parameter subgroup of the full isometry group (or equivalently, of its connected identity component).
\vspace{10mm}

\subsection{Proof of Theorem~\ref{th:red} and Corollary~\ref{cor:gom}}

\label{ss:LOmth}
\begin{proof}[Proof of Theorem~\ref{th:red}]
The space $F^m/\diag(F)$ with an invariant metric is isometric to the Lie group $F^{m-1}$ with a left-invariant metric which is also right-invariant with respect to the subgroup $\diag(F)$. 

{
We need the following variation of \cite[Theorem~3]{Ozeki}. Suppose a compact, connected and simply connected Lie group $K$ admits a left-invariant metric $\rho$ that is $\Ad(Q)$-invariant with respect to a connected subgroup $Q\subset K$. This means that $\rho$ is right-invariant with respect to $K$ or equivalently, that the inner product $(\cdot,\cdot)$ on the Lie algebra $\kg=\Lie(K)$ which generates $\rho$ satisfies $([U,X],Y)+(X,[U,Y])=0$ for any $X,Y \in \kg$ and any $U \in \mathfrak{q} = \Lie(Q)$ (so that $(\cdot,\cdot)$ is $\ad(\mathfrak{q})$-invariant). In general, $K$ does not have to be normal in the full connected isometry group $G$ of the metric $\rho$, but we have the following.

\begin{proposition}\label{groupchange}
There is a normal subgroup $K_1$ in $G$ isomorphic to $K$ that admits a left-invariant and $\Ad(Q_1)$-invariant metric $\rho_1$ isometric to $\rho$,
where $Q_1$ is the image of $Q$ under the isomorphism between $K$ and $K_1$.
\end{proposition}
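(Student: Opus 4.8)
The plan is to find inside $\g = \Lie(G)$ an ideal that is a Lie‑algebra complement to the isotropy algebra, transport the simply transitive action and the metric to the corresponding normal subgroup of $G$, and then check that the $\Ad(Q)$‑invariance is carried along. For the setup: $G$, being the identity component of the isometry group of the compact manifold $K$, is compact, so $\g$ is a compact Lie algebra; since $K$ is compact and simply connected, $\kg = \Lie(K)$ is semisimple. Let $H = G_e$ be the isotropy subgroup of $e \in K$ and $\h = \Lie(H)$. Because $K$ acts on itself simply transitively by left translations, the orbit map $k \mapsto k\cdot e$ is a diffeomorphism, so $G = KH$ and $K \cap H = \{e\}$; hence $\g = \kg \oplus \h$ is a vector‑space direct sum of the subalgebras $\kg$ and $\h$. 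Finally, the $\ad(\mathfrak{q})$‑invariance of $(\cdot,\cdot)$ is equivalent to each right translation $R_q$, $q \in Q$, being an isometry of $\rho$; thus $R_Q := \{R_q \mid q \in Q\}$ is a subgroup of $G$ isomorphic to $Q$ which centralises $K$, and the conjugations $c_q = L_q R_{q^{-1}}$ lie in $H$.

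The core claim to establish is that there is a semisimple ideal $\kg_1 \trianglelefteq \g$ with $\kg_1 \cong \kg$ and $\kg_1 \cap \h = \{0\}$. Once this is in hand, $\dim \kg_1 = \dim \kg$ forces $\kg_1 \oplus \h = \g$, so the connected subgroup $K_1 \subset G$ with $\Lie(K_1) = \kg_1$ is compact, closed (being semisimple) and normal, satisfies $K_1 H = G$, and meets $H$ in a finite subgroup. Since $K = G/H = K_1/(K_1 \cap H)$ is simply connected, this forces $K_1 \cap H = \{e\}$, so $K_1$ acts on $K$ simply transitively; being diffeomorphic to the simply connected manifold $K$ the group $K_1$ is simply connected, hence is the unique simply connected Lie group with Lie algebra $\kg_1$, and as $\kg_1 \cong \kg$ we get $K_1 \cong K$. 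Transporting $\rho$ to $K_1$ along the orbit map then produces a left‑invariant metric $\rho_1$ on $K_1$ isometric to $\rho$.

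To prove the core claim --- this is the point where the variation of \cite[Theorem~3]{Ozeki} is needed --- I would take $\g_1 \trianglelefteq \g$ to be the ideal generated by $\kg$; it is a sum of simple ideals of $\g$, hence compact semisimple, and one checks $\g_1 = \kg \oplus \h_1$ with $\h_1 := \h \cap \g_1$, i.e.\ $\g_1$ is the direct sum of the semisimple subalgebra $\kg$ and the subalgebra $\h_1$. Ozeki's argument shows this to be very rigid: the simple ideals of $\g_1$ are isomorphic to the simple ideals of $\kg$, grouped so that $\kg$ sits ``diagonally'', and $\h_1$ coincides, after an automorphism of $\g_1$, with a sum of certain of these simple ideals; selecting from each diagonal group the simple ideal isomorphic to the matching simple ideal of $\kg$ and not occurring in $\h_1$ yields an ideal $\kg_1$ of $\g$ with $\kg_1 \cong \kg$ and $\kg_1 \cap \h_1 = \{0\}$, together with a distinguished Lie‑algebra isomorphism $\phi \colon \kg \to \kg_1$ (essentially projection along the complementary ideal) under which the left‑$K_1$‑invariant inner product on $\kg_1$ inducing $\rho_1$ is exactly the push‑forward of $(\cdot,\cdot)$ by $d\phi$. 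Setting $Q_1 = \phi(Q)$, the $\ad(\mathfrak{q})$‑invariance of $(\cdot,\cdot)$ is then carried by $d\phi$ to $\ad(\Lie(Q_1))$‑invariance of the inner product inducing $\rho_1$, i.e.\ $\rho_1$ is $\Ad(Q_1)$‑invariant; tracking this metric compatibility (and hence the subgroup $Q$) is precisely the part that goes beyond the statement of \cite[Theorem~3]{Ozeki}.

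I expect the main obstacle to be the rigidity statement for the decomposition $\g_1 = \kg \oplus \h_1$ of a compact semisimple Lie algebra into a semisimple subalgebra and a subalgebra --- identifying the simple ideals of $\g_1$, and pinning down the position of $\h_1$ up to an automorphism of $\g_1$ --- which is the heart of Ozeki's theorem. The rest (getting simple transitivity of $K_1$ from simple connectivity of $K$, transporting the metric, and transferring the $\Ad(Q)$‑invariance through $\phi$) should then be routine, the only care needed being the compatible choice of $\phi$ so that $Q_1 = \phi(Q)$ works.
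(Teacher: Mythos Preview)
Your approach is essentially the same as the paper's. The paper invokes \cite[Theorem~1]{Ozeki} as a black box to obtain the decomposition $\g=\kg_1\oplus\h_1$ into ideals together with a Lie algebra homomorphism $\varphi:\kg_1\to\h_1$ with $\kg=\{X+\varphi(X)\mid X\in\kg_1\}$; your isomorphism $\phi:\kg\to\kg_1$ is precisely the inverse of $X\mapsto X+\varphi(X)$, and your sketch of the ``rigidity statement'' is an outline of what Ozeki's Theorem~1 proves. The one place where the paper is more explicit is the $\ad(\mathfrak{q}_1)$-invariance: you assert it is carried by $\phi$, while the paper writes out the two-line computation
\[
([U,X],X)_1=([U,X]+[\varphi(U),\varphi(X)],X+\varphi(X))=([U+\varphi(U),X+\varphi(X)],X+\varphi(X))=0,
\]
using that $\varphi$ is a homomorphism and $[\kg_1,\h_1]=0$ --- exactly the fact that $\phi$ is a Lie algebra isomorphism, not merely linear. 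Conversely, you spell out more of the passage from Lie algebras to Lie groups (simple transitivity, $K_1\cap H=\{e\}$ from simple connectedness), which the paper defers to ``the rest of the proof is the same as the proof of \cite[Theorem~3]{Ozeki}''.
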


\begin{proof} Let $H$ be the isotropy group of $\rho$ at the identity. At the level of Lie algebras, we have a decomposition $\g=\kg \oplus \h$ into the direct sum of linear subspaces. Then by \cite[Theorem~1]{Ozeki}, there exists a decomposition $\g=\kg_1\oplus\h_1$ into the direct sum of Lie algebras, where $\kg_1$ is isomorphic to $\kg$ and $\h_1$ is isomorphic to $\h$, and moreover, there is a Lie algebra homomorphism $\varphi:\kg_1 \rightarrow \h_1$ such that $\kg = \{X+\varphi(X) \,|\,X\in \kg_1\}$. Consider a subalgebra $\mathfrak{q}_1 \subset \kg_1$ such that $\mathfrak{q}=\{X+\varphi(X)\,|\,X\in \mathfrak{q}_1\}$ and
the inner product $(\cdot,\cdot)_1$ on $\kg_1$ such that $(X,Y)_1=(X+\varphi(X),Y+\varphi(Y))$ for $X,Y\in \kg_1$. This inner product is isometric to $(\cdot,\cdot)$.

To prove that $(\cdot,\cdot)_1$ is $\ad(\mathfrak{q}_1)$-invariant, take arbitrary $X \in \kg_1$ and $U\in \mathfrak{q}_1$. Then
\begin{align*}
([U,X],X)_1 &=([U,X]+[\varphi(U),\varphi(X)], X+\varphi(X)) \\
&=([U+\varphi(U),X+\varphi(X)],X+\varphi(X))=0,
\end{align*}
since $(\cdot,\cdot)$ is $\ad(\mathfrak{q})$-invariant, $\varphi$ is a homomorphism and $[\kg_1, \h_1]=0$.

The rest of proof is the same as the proof of \cite[Theorem~3]{Ozeki}.
\end{proof}

\begin{remark} \label{rem:nonsimplycon}
If we drop the condition that $K$ is simply connected, then $K_1$ is locally isomorphic to $K$ and acts on $K$ transitively.
\end{remark}
}

Let $G$ be the connected isometry group of $F^m/\diag(F)$. By Proposition~\ref{groupchange}, there is an isometric left-invariant metric on $F^{m-1}$ which is also right-invariant with respect to the subgroup $\diag(F)$ and is such that the group of left translations is normal in $G$. Then by \cite[Theorem~3]{OT} the group $G$ is the direct product of the group $F^{m-1}$ of left translations and a certain subgroup $K \subset F^{m-1}$ of right translations. We know that $K \supset \diag(F)$, and so by Lemma~\ref{l:subal}, $K=\prod_{i=1}^s K_i$, where $K_i = \diag(F) \subset F^{m_i-1}$, $m_i >1$ and $\prod_{i=1}^s F^{m_i-1} =F^{m-1}$. The tangent spaces to the subgroups $F^{m_i-1}$ are orthogonal and so the given Ledger--Obata space $F^m/\diag(F)$ is isometric to the product of Ledger--Obata spaces $F^{m_i}/\diag(F)$, and its connected isometry group $G$ is the product $\prod_{i=1}^m F^{m_i}=F^{m+s-1}$. Note that each of the factors $F^{m_i}/\diag(F)$ is irreducible by \cite[Proposition~8]{Nomizu}.
\end{proof}

\begin{proof}[Proof of Corollary~\ref{cor:gom}]
A smooth curve in a product manifold is a geodesic parameterised by an affine parameter if and only if its projections to the factors are. As the connected isometry group of the Ledger--Obata space is the product of the connected isometry groups of its factors, the claim follows from Theorem~\ref{th:gointro}. 
\end{proof}

\subsection{Reducibility of Ledger--Obata spaces}
\label{ss:reduc}

To make Corollary~\ref{cor:gom} more effective in practice we present in this section a method of determining that a given inner product generates a reducible invariant metric on a Ledger--Obata space. The method is based on the holonomy computation.

Suppose a Ledger--Obata space is reducible. Let $\m$ be the $\ip$-orthogonal complement to $\h \subset \g$ as defined by \eqref{eq:defm} and let $A$ be the metric endomorphism on $\m$. Extend $A$ to the symmetric endomorphism $C$ on $\g$ as in Section~\ref{ss:holo}. Let $\m=\m_1 \oplus \m_2$ be the $(\cdot, \cdot)$-orthogonal decomposition of $\m$ into the subspaces invariant relative to the holonomy algebra. Denote $m_a=\dim \m_a/\dim \f + 1 > 1$; then $m_1 + m_2 = m+1$. By Lemma~\ref{l:holinv}, both subspaces $\g_a=\h \oplus \m_a$ are subalgebras in $\g$, and so by Lemma~\ref{l:subal}, there exist two partitions $\P^a = \{S^a_i\}_{i=1}^{m_a}, \; a=1,2$, of the set $\{1, 2, \dots, m\}$ such that $\g_a= \Span(\chi^a_i \otimes X \, | \, X \in \f, \, i=1, 2, \dots, m_a)$, where for $a=1, 2, \; i=1, 2, \dots, m_a$, the $k$-th component of the vector $\chi^a_i \in \br^m$ is $1$, if $k \in S^a_i$ and is zero otherwise. 

By construction, $\g=\g_1+\g_2, \; \dim \g_a = (m_a+1) \dim \f$ and $\g_1 \cap \g_2 = \h$ which is equivalent to the fact that the partitions $\P^a$ satisfy the following conditions (note that \eqref{it:part3} follows from the other two):
\begin{enumerate}[label=(\alph*), ref=\alph*]
  \item \label{it:part1}
  $|\P^a| = m_a > 1$ and $|\P^1|+|\P^2|=m+1$;

  \item \label{it:part2}
  If for nonempty sets $J_a, \; a=1,2$, we have $\cup_{i \in J_1} S^1_i = \cup_{i \in J_2} S^2_i$, then $J_a=\{1, 2, \dots, m_a\}$.

  \item \label{it:part3}
  for any $i_1 \le m_1, \; i_2 \le m_2$, the parts $S^1_{i_1}$ and $S^2_{i_2}$ have no more than one element in common.
\end{enumerate}
All such pairs of partitions can be constructed using the following simple algorithm.

\begin{algorithm*}
Given $m > 2$, consider a tree on $m+1$ vertices other than the complete bipartite graph $K_{1,m}$. Label the edges arbitrarily by the numbers $1, 2, \dots, m$. Choose an arbitrary vertex and colour it white, then colour all its adjacent vertices black, then colour all their adjacent vertices white, and so on. Let $m_1$ and $m_2$ be the number of white and black vertices respectively. Label the white vertices by the numbers $1, 2, \dots, m_1$ and the black vertices, $1, 2, \dots, m_2$. Then the parts of the partition $\P^1$ are $S^1_i, \; i=1, \dots, m_1$, where $S^a_i$ is the set of labels on the edges incident to the white vertex labelled $i$, and similarly for $\P^2$ and the black vertices.
\end{algorithm*}
It is a matter of simple verification to see that there is a bijection, up to relabelling, between the pairs of partitions satisfying \eqref{it:part1}, \eqref{it:part2} and \eqref{it:part3}, and isomorphic classes of trees on $m+1$ vertices.

We next consider the endomorphism $C$ on $\g$. From Section~\ref{ss:holo} we know that $C$ is $\ad(\h)$-invariant, symmetric, positive semidefinite, and $\ker C = \h$. It follows that there is a unique $m \times m$ matrix $T$ such that for every $a \in \br^m$ and $X \in \f$ we have $C(aX)=(Ta) X$. The matrix $T$ is symmetric, positive semidefinite, and $\ker T = (1,1, \dots, 1)$.

We want to characterize those such matrices $T$ which produce an endomorphism $C$, for which the holonomy algebra given in Lemma~\ref{reduce-criterion} is reducible, with $\m=\m_1 \oplus \m_2$ being the decomposition into invariant subspaces. We say that an $m \times m$ matrix $N$ \emph{agrees} with a partition $\P$ of the set $\{1,2, \dots, m\}$ if $N_{ij}=0$ whenever $i$ and $j$ belong to the different parts of $\P$.

\begin{proposition} \label{p:matrix}
Let $T$ be an $m \times m$ symmetric, positive semidefinite matrix with $\ker T = (1,1, \dots, 1)$. The endomorphism $C$ of $\g$ corresponding to $T$ defines the inner product whose holonomy algebra is reducible, with complementary invariant subspaces $\m_1, \m_2 \subset \m$ if and only if $T = T^{(1)} +
T^{(2)}$, where $T^{(a)}, \; a=1, 2$, is an $m \times m$ symmetric matrix with $\ker T^{(a)} \ni (1,1, \dots, 1)$ which agrees with the partition $\P^a$.
\end{proposition}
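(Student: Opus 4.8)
The plan is to convert the holonomy‑reducibility condition into a single algebraic identity for $T$ and then to solve that identity using the combinatorics of the pair of partitions $\P^1,\P^2$. Identify $\g=m\f$ with $\br^m\otimes\f$, so that $a\otimes X=(a_1X,\dots,a_mX)$, the bracket is $[a\otimes X,b\otimes Y]=(a\diamond b)\otimes[X,Y]$, the form $\ip$ is given by $\<a\otimes X,b\otimes Y\>=\<a,b\>\,\<X,Y\>$ (first factor the standard inner product on $\br^m$, second minus the Killing form of $\f$), and $C(a\otimes X)=(Ta)\otimes X$. Write $\mathbf{1}=(1,1,\dots,1)$ and $V_a=\Span(\chi^a_1,\dots,\chi^a_{m_a})\subset\br^m$, so that $\h=\br\mathbf{1}\otimes\f$, $\m=\mathbf{1}^\perp\otimes\f$, and, as in the discussion preceding the Proposition (via Lemma~\ref{l:holinv} and Lemma~\ref{l:subal}), $\m_a=W_a\otimes\f$ with $W_a=V_a\cap\mathbf{1}^\perp$, $V_a=W_a\oplus\br\mathbf{1}$, and $W_1\oplus W_2=\mathbf{1}^\perp$. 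The computation following Lemma~\ref{reduce-criterion} shows that $(\Gamma_Z X,Y)=\<C[Z,X],Y\>+\<[Z,CX],Y\>-\<[CZ,X],Y\>$ for all $X,Y\in\m$ and $Z\in\g$; evaluating this with $Z=z\otimes P$, $X=x\otimes Q$, $Y=y\otimes R$ ($z\in\br^m$, $x,y\in\mathbf{1}^\perp$, $P,Q,R\in\f$) and using the symmetry of $T$ together with the total symmetry of $\<a\diamond b,c\>=\sum_k a_kb_kc_k$, one obtains
\begin{equation*}
(\Gamma_Z X,Y)=\left\langle z,\ x\diamond Ty+y\diamond Tx-T(x\diamond y)\right\rangle\cdot\<[P,Q],R\>.
\end{equation*}
Since the holonomy algebra is generated by the operators $\Gamma_Z$ and consists of skew‑symmetric endomorphisms of $(\m,(\cdot,\cdot))$, while $\m_2$ is the $(\cdot,\cdot)$‑orthogonal complement of $\m_1$, it preserves the decomposition $\m=\m_1\oplus\m_2$ if and only if $\Gamma_Z\m_1\subseteq\m_1$ for every $Z$, that is, if and only if $(\Gamma_Z X,Y)=0$ for all $Z\in\g$, $X\in\m_1$, $Y\in\m_2$; and since $\f$ is simple, $P,Q,R$ can be chosen with $\<[P,Q],R\>\ne 0$ while $z$ is arbitrary, so this is equivalent to the identity
\begin{equation*}
x\diamond Ty+y\diamond Tx=T(x\diamond y)\qquad\text{for all }x\in W_1,\ y\in W_2.\tag{$\star$}
\end{equation*}
As $(\star)$ holds trivially for $x=\mathbf{1}$ (any $y$) and for $y=\mathbf{1}$ (any $x$), it is equivalent to the same identity for all $x\in V_1$ and all $y\in V_2$.

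For the implication $T=T^{(1)}+T^{(2)}\Rightarrow(\star)$: since $T^{(a)}$ agrees with $\P^a$ it is block‑diagonal over the parts of $\P^a$, and $\ker T^{(a)}\ni\mathbf{1}$ forces each diagonal block to kill the all‑ones vector of its part; hence $T^{(1)}$ vanishes on $V_1$ and $T^{(2)}$ on $V_2$. Thus for $x\in V_1$ and $y\in V_2$ one has $Ty=T^{(1)}y$, $Tx=T^{(2)}x$, so $(\star)$ becomes $x\diamond T^{(1)}y+y\diamond T^{(2)}x=T^{(1)}(x\diamond y)+T^{(2)}(x\diamond y)$; this is checked one coordinate at a time, using that $x$ is constant on each part of $\P^1$ and that $T^{(1)}$ maps that part into itself, so that $(x\diamond T^{(1)}y)_k=x_k(T^{(1)}y)_k=(T^{(1)}(x\diamond y))_k$ at every index $k$, and symmetrically for the $T^{(2)}$‑terms.

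The substantive direction is $(\star)\Rightarrow T=T^{(1)}+T^{(2)}$. Fix $l\in\{1,\dots,m\}$ and let $i,j$ satisfy $l\in S^1_i$ and $l\in S^2_j$; by condition~\eqref{it:part3}, $S^1_i\cap S^2_j=\{l\}$, so $\chi^1_i\diamond\chi^2_j$ is the $l$‑th coordinate vector. Substituting $x=\chi^1_i$, $y=\chi^2_j$ into $(\star)$ and reading off the component at an index $p$ gives $T_{pl}=0$ whenever $p\notin S^1_i\cup S^2_j$ --- that is, whenever $p$ and $l$ lie in no common part of either partition --- and a compatible relation on the diagonal when $p=l$. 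Since, again by~\eqref{it:part3}, two distinct indices cannot lie in a common part of both partitions, one may define $T^{(1)}$ by setting $T^{(1)}_{pl}=T_{pl}$ when $p\ne l$ lie in a common part of $\P^1$, $T^{(1)}_{pl}=0$ for all other pairs $p\ne l$, and $T^{(1)}_{ll}=-\sum_{q\in S^1_i\setminus\{l\}}T_{lq}$ (so that every row of $T^{(1)}$ sums to zero), and put $T^{(2)}=T-T^{(1)}$. Then $T^{(1)}$ is symmetric, agrees with $\P^1$, and annihilates $\mathbf{1}$; consequently $T^{(2)}$ is symmetric and, since $T\mathbf{1}=0$, annihilates $\mathbf{1}$; and $T^{(2)}$ agrees with $\P^2$, because for distinct $p,l$ in different $\P^2$‑parts either they lie in a common $\P^1$‑part, whence $T^{(2)}_{pl}=T_{pl}-T^{(1)}_{pl}=0$, or they lie in no common part, whence $T_{pl}=0$ by the vanishing just established and $T^{(1)}_{pl}=0$. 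This yields the decomposition and completes the equivalence.

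I expect the main obstacle to be the first paragraph --- recognising that, since it suffices for each generator $\Gamma_Z$ to preserve the (already determined) subspaces $\m_1,\m_2$, the holonomy‑reducibility condition, which a priori involves brackets of the $\Gamma_Z$, collapses to the first‑order identity $(\star)$ --- together with the bookkeeping in the last paragraph, where one must check that the off‑diagonal entries of $T$ split unambiguously between the two partitions (which is exactly what condition~\eqref{it:part3} guarantees) and that the diagonal entries split so that each summand still annihilates $\mathbf{1}$.
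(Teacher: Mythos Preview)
Your proof is correct and follows essentially the same approach as the paper's: both reduce holonomy reducibility to the derivation-type identity $T(x\diamond y)=x\diamond Ty+y\diamond Tx$ on $V_1\times V_2$, then substitute $x=\chi^1_i$, $y=\chi^2_j$ to force the off-diagonal vanishing pattern of $T$, and finally split $T$ into $T^{(1)}+T^{(2)}$ by assigning each off-diagonal entry to the partition in which its indices share a part (with diagonals adjusted so $\mathbf{1}$ is in the kernel). Your verification of the ``if'' direction via the observation that $T^{(a)}$ annihilates $V_a$ is a slightly cleaner packaging than the paper's direct check on basis vectors, but the content is the same.
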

\begin{proof}
From Lemma~\ref{reduce-criterion} and the following discussion in Section~\ref{ss:holo} we obtain that the holonomy algebra defined by $C$ is reducible, with complementary invariant $(\cdot, \cdot)$-orthogonal subspaces $\m_1, \m_2 \subset \m$ if and only if $C$ is $\ad(\f)$-invariant and for every $Z \in \m$ and $X \in \m_1, \; Y \in \m_2$ we have
\begin{align*}
  0&=(\Gamma_Z X,Y) =\< C (\Gamma_Z X),Y \>=\< C[Z,X],Y \>+\< [Z,CX],Y\> -\< [CZ,X],Y\>\\
  &=\<[X,CY] + [CX,Y] - C[X,Y],Z\>,
\end{align*}
so that $C$ acts ``like a derivation": for all $X \in \m_1, \; Y \in \m_2$, we have $C[X,Y]=[X,CY] + [CX,Y]$. As $C$ is symmetric and $C|_\h=0$, this is equivalent to the same equation being satisfied for all $X \in \g_1=\m_1 \oplus \h, \; Y \in \g_2=\m_2 \oplus \h$. In terms of the matrix $T$, this equation is equivalent to the equation
\begin{equation}\label{eq:der}
  T(x \diamond y) = (Tx) \diamond y + x \diamond (Ty),
\end{equation}
for all $x \in \Span(\chi^1_1, \chi^1_2, \dots, \chi^1_{m_1}), \;  y \in \Span(\chi^2_1, \chi^2_2, \dots, \chi^2_{m_2})$. For every $i \in \{1,2, \dots, m\}$, there are uniquely defined $1 \le r_1 \le m_1, \; 1 \le r_2 \le m_2$ such that $i \in S^1_{r_1}, S^2_{r_2}$; then by property~\eqref{it:part3}, $S^1_{r_1} \cap S^2_{r_2} = \{i\}$ and so $\chi^1_{r_1} \diamond \chi^2_{r_2} = e_i$, the $i$-th vector of the standard basis for $\br^m$. Substituting $x = \chi^1_{r_1}, \; y=\chi^2_{r_2}$ in \eqref{eq:der} we obtain $\sum_{j=1}^{m} T_{ij} e_j = \sum_{k \in S^1_{r_1}, l \in S^2_{r_2}} T_{kl} (e_k+e_l)$. It follows that $T_{ij}=0$ if $j \notin S^1_{r_1} \cup S^2_{r_2}$. In other words, if $T_{ij} \ne 0$, then $i$ and $j$ belong either to the same part of $\P^1$, or to the same part of $\P^2$. This property is in fact equivalent to \eqref{eq:der}. Indeed, suppose it holds for the matrix $T$. Take arbitrary $r_a=1, 2, \dots, m_a, \; a=1,2$ and substitute $x = \chi^1_{r_1}, \; y=\chi^2_{r_2}$ in \eqref{eq:der}. If $S^1_{r_1} \cap S^2_{r_2} = 0$, then $\chi^1_{r_1} \diamond \chi^2_{r_2} = 0$ and no $i \in S^1_{r_1}, \; j \in S^2_{r_2}$ belong to the same part of any of either $\P^1$ or $\P^2$; hence both sides of \eqref{eq:der} are zeros. If $S^1_{r_1} \cap S^2_{r_2} = \{i\}$, then $T_{ij}=0$ for all $j \notin S^1_{r_1} \cup S^2_{r_2}$, and $T_{kl}=0$ for all $k \in S^1_{r_1} \setminus \{i\}, \; l \in S^2_{r_2} \setminus \{i\}$, and again, the sides of \eqref{eq:der} are equal.

Now for $i \ne j$, define $T^{(a)}_{ij}$ to be equal to $T_{ij}$ if $i$ and $j$ belong to the same part of the partition $\P^a, \; a =1,2$, and to be zero otherwise. Define the diagonal elements $T^{(a)}_{ii}$ in such a way that $\ker T^{(a)} \ni (1,1, \dots, 1)$. Then the matrix $T^{(a)}$ agrees with the partition $\P^a$. Furthermore, as $i \ne j$ can not belong to the same parts of the both partitions $\P^1$ and $\P^2$ (property~\eqref{it:part3}), the matrices $T$ and $T^{(1)} + T^{(2)}$ agree outside the diagonals. This implies that $T = T^{(1)} + T^{(2)}$, as the kernel of the both sides contains the vector $(1,1, \dots, 1)$.
\end{proof}

\begin{example} \label{ex:graph}
Let $m=7$. Consider the tree on $m+1=8$ vertices as on the left in Figure~\ref{fig:tree}. Label the vertices and edges as on the right in Figure~\ref{fig:tree}:
white vertices $1$ through to $3$, black vertices $1$ through to $5$ and the edges $1$ through to $7$.

\begin{figure}[t]
\includegraphics{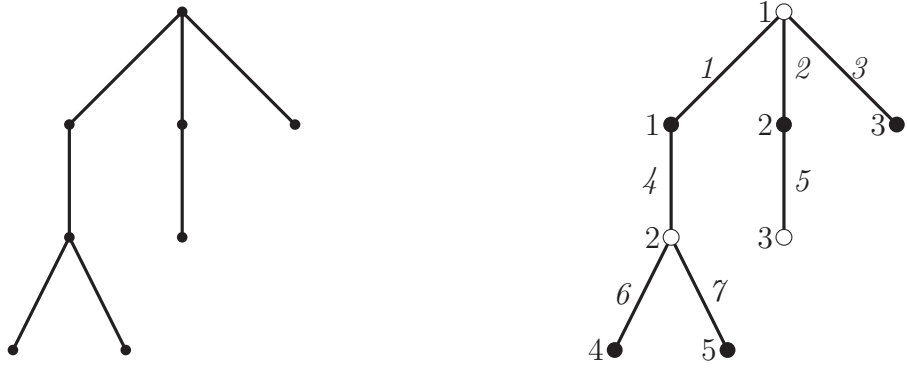}
\caption{A tree and its labelling which produces the pair of partitions.}\label{fig:tree}
\end{figure}

By our algorithm, the tree defines two partitions of the set $\{1,2, \dots, 7\}$ given by $\P^1= 123 | 467 | 5$ and $\P^2= 14 | 25 | 3 | 6 | 7$.
This pair of partitions defines two subalgebras $\g_1 = \Span(\chi^1_1, \chi^1_2, \chi^1_3) \otimes \f$ and
$\g_2 = \Span(\chi^2_1, \chi^2_2, \chi^2_3, \chi^2_4, \chi^2_5) \otimes \f$ of the algebra $\g=7 \f$,
where $\chi^1_1 = e_1+e_2+e_3, \; \chi^1_2 = e_4+e+6+e_7, \; \chi^1_3 = e_5$ and $\chi^2_1 = e_1+e_4, \;
\chi^2_2 = e_2+e_5, \; \chi^2_3 = e_3, \; \chi^2_4 = e_6, \; \chi^2_5 = e_7$. We have $\g_1 \simeq 3 \f, \; \g_2 \simeq 5 \f, \; \g_1 + \g_2 = \g$ and
$\g_1 \cap \g_2 = \h = \diag(\f)$. This defines the splitting $\m = \m_1 \oplus \m_2$, where $\m, \m_1$ and $\m_2$ are the
$\ip$-orthogonal complements to $\h$ in $\g, \g_1$ and $\g_2$ respectively. Then the matrix $T$ which defines the inner product on $\m$ is given by
$T = T^{(1)} + T^{(2)}$, where $T^{(a)}$ agrees with $\P^a$ for $a=1,2$:

\begin{eqnarray*}
  T = T^{(1)} + T^{(2)} &= \left(
                            \begin{array}{ccccccc}
                              x_1 + x_2 & -x_1 & -x_2 & 0 & 0 & 0 & 0 \\
                              -x_1 & x_1 + x_3 & -x_3 & 0 & 0 & 0 & 0 \\
                              -x_2 & -x_3 & x_2 + x_3 & 0 & 0 & 0 & 0 \\
                              0 & 0 & 0 & x_4 + x_5 & 0 & -x_4 & -x_5 \\
                              0 & 0 & 0 & 0 & 0 & 0 & 0 \\
                              0 & 0 & 0 & -x_4 & 0 & x_4 + x_6 & -x_6 \\
                              0 & 0 & 0 & -x_5 & 0 & -x_6 & x_5 + x_6 \\
                            \end{array}
                          \right) \\
                          &+
                          \left(
                            \begin{array}{ccccccc}
                              y_1 & 0 & 0 & -y_1 & 0 & 0 & 0 \\
                              0 & y_2 & 0 & 0 & -y_2 & 0 & 0 \\
                              0 & 0 & 0 & 0 & 0 & 0 & 0 \\
                              -y_1 & 0 & 0 & y_1 & 0 & 0 & 0 \\
                              0 & -y_2 & 0 & 0 & y_2 & 0 & 0 \\
                              0 & 0 & 0 & 0 & 0 & 0 & 0 \\
                              0 & 0 & 0 & 0 & 0 & 0 & 0 \\
                            \end{array}
                          \right),
\end{eqnarray*}
where $x_1, x_2, x_3, x_4, x_5, x_6, y_1, y_2 \in \br$ are such that $T$ is positive semidefinite, of rank $6$; for example, all positive. The metric on the Ledger--Obata space $F^7/\diag(F)$ generated by the inner product defined by $T$ is reducible: $F^7/\diag(F) = (F^3/\diag(F)) \times (F^5/\diag(F))$, and the tangent spaces to the factors are $\m_1$ and $\m_2$ respectively.

To construct all reducible metrics on $F^7/\diag(F)$ one has to similarly analyse all the non-isomorphic trees with $8$ vertices other than $K_{1,7}$.
\end{example}

\vspace{10mm}

\vspace{10mm}

\end{document}